
\documentclass[11pt,a4paper]{article}
\usepackage{amsmath, amsthm, amssymb,amsfonts}
\usepackage[english]{babel}
\usepackage{graphicx}

\theoremstyle{plain}   
\newtheorem{theorem}{Theorem}
\newtheorem{proposition}{Proposition}
\newtheorem{corollary}{Corollary}
\newtheorem{lemma}{Lemma}
\newtheorem{conjecture}{Conjecture}
\newtheorem{remark}{Remark}
\newtheorem{definition}{Definition}
\setcounter{MaxMatrixCols}{11}

\oddsidemargin .5cm
\evensidemargin .5cm
\textwidth=15.2truecm
\textheight=21truecm
\unitlength=1cm
\parskip=3pt
\baselineskip=15pt


\def\A{\mbox{\boldmath $A$}}

\def\I{\mbox{\boldmath $I$}}
\def\J{\mbox{\boldmath $J$}}

\def\P{\mbox{\boldmath $P$}}

\newcommand{\Tr}{\operatorname{Tr}}\makeatletter
\let\lcm\relax 
\DeclareRobustCommand{\lcm}{\mathop{\operator@font lcm}}



\title{On the nonexistence of almost Moore digraphs with self-repeats}

\author{Arnau Messegu\'e\thanks{Supported in part by grant 2021SGR-00434.}\\
\small Universitat de Lleida\\[-0.8ex]
\small Lleida, Spain\\
\small\tt arnau.messegue@udl.cat 
\and
Josep M. Miret\thanks{Supported in part by grants PID2021-124613OB-I00 and 2021SGR-00434.}\\
\small Universitat de Lleida\\[-0.8ex]
\small Lleida, Spain\\
\small\tt josepmaria.miret@udl.cat
\and
Anita A. Sillasen\\
\small Aalborg University\\[-0.8ex]
\small Aalborg, Denmark\\
\small\tt anitasillasen@gmail.com
}


\begin{document}

\maketitle

\begin{abstract}
An almost Moore digraph is a diregular digraph of degree $d>1$, diameter $k>1$ and order $d+d^2+ \cdots +d^k$. 
 Their existence has only been shown for $k=2$. 
It has also been conjectured that there are no more almost Moore digraphs, but so far their nonexistence has only been proven for $k=3,4$ and for $d=2,3$ when $k\geq 3$.

In this paper we study the structure of the subdigraphs of an almost Moore digraph induced by the vertices fixed by an automorphism determined by a power of the permutation $r$ of repeats of the digraph. We deduce that
each almost Moore digraph of degree $d$ and diameter $k$ with self-repeats has such a subdigraph whose vertices have order $\leq d-1$ under $r$. 
From this, 
we extend the results about the nonexistence of  almost Moore digraphs with self-repeats of degrees 4 and 5 to those whose diameter 
is large enough with respect to the degree. More precisely,  we prove their nonexistence when $k\geq 2(d-1)$ if $k$ is odd and
when $k \geq 2(d-1)^2$ if $k$ is even. We also show that these findings jointly with
other results imply that there are no almost Moore digraphs with self-repeats
for degrees $d$, $6\leq d\leq 12$, and $k>2$.

\end{abstract}


\section{Introduction}
An upper bound for  the largest possible number of vertices of a digraph with maximum out-degree $d$ and diameter $k$ is the Moore bound $1+d+\cdots+d^k$. However,  
Plesn\'{\i}k and  Zn\'{a}m in \cite{PZ74} and later 
Bridges and Toueg in \cite{bridges80} proved the nonexistence of digraphs which reach this bound unless $d = 1$ (directed cycles of length $k+1$) or $k = 1$ (complete symmetric digraphs).
Then, the existence of almost Moore digraphs, also called $(d,k)$-digraphs, that is, digraphs of maximum out-degree $d>1$, diameter $k>1$ and order one less than the Moore bound is an interesting and hard question, which appears in the context of the {degree/diameter problem}. For a survey of this problem, see \cite{ms05}.

Almost Moore digraphs are known to be diregular \cite{diregular}, but concerning their existence it seems they do only exist for $k=2$. In \cite{fiol83} Fiol et al. proved $(d,2)$-digraphs do exist for any degree $d>1$ and in \cite{gimbert01} Gimbert  finalised their complete classification. 

There exist two conjectures such that, assuming that either of them is  true, the nonexistence of $(d,k)$-digraphs for any $k\geq 3$ is proven.  One of them, based on the structure of the out-neighbours \cite{CBU} was given by Cholily in \cite{cholily}. The other, related with the factorization of certain polynomials which appear in the characteristic polinomial of the adjacency matrix, was given by Gimbert \cite{g99} and it is called cyclotomic conjecture. In \cite{cggmm14} the nonexistence is also proven assuming this conjecture. Nevertheless, their nonexistence without any conjecture has only been proven in a few cases.
Miller and Fris \cite{miller92}
proved that there are no $(2,k)$-digraphs for $k\geq 3$ and Baskoro et al. \cite{b95,baskoro973} proved the nonexistence of $(3,k)$-digraphs for $k\geq 3$.
Regarding the diameter, Conde et al. \cite{cggmm08,cggmm13} showed the nonexistence of $(d,3)$ and $(d,4)$-digraphs. More recently, for $(4,k)$ and $(5,k)$-digraphs, in the case of having self-repeats López et al. \cite{almost4-5} proved  they do no exist for $k\geq 5$. In the case without self-repeats, Miret et al. \cite{mrs} have reduced the number of possible structures such digraphs can have.

In this paper, we prove that $(d,k)$-digraphs with self-repeats do not exist  when the diameter $k$ is large with respect to the degree $d$. More precisely, for the pairs $(d,k)$ such that either $k$ is odd with $k\geq 2(d-1)$ or $k$ is even with $k\geq 2(d-1)^2$ there are no $(d,k)$-digraphs with self-repeats.  To do this, we study first the structure of certain subdigraphs of almost Moore digraphs, which are induced by the vertices fixed by an automorphism.  Then, we prove that any $(d,k)$-digraph with self-repeats has such a $(d',k)$-subdigraph, $d'\leq d$, with
permutation cycle structure of the form $(k,\ldots, m_{d'-1},0\ldots,0)$. 
Studying these digraphs in conjunction with well-known results from the literature further properties are deduced. Using these properties together with those obtained when studying the spectra of their adjacency matrix  we can extend the results given in \cite{almost4-5} for degrees 4 and 5 to show there are no almost Moore digraphs with self-repeats when the diameter is large enough with respect to the degree. Finally, combining these results with the validation (verified computationally) that the cyclotomic conjecture holds for certain small values, we conclude that for degree $d$, $6\leq d\leq 12$, there are no almost Moore digraphs with self-repeats.

\section{Almost Moore digraphs}

A digraph $G$ is a finite nonempty set $V(G)$ of objects called {\em vertices} together with a set $E(G)$ of ordered pairs of distinct vertices called {\em arcs}. The {\em order} of $G$ is the cardinality of $V(G)$ denoted by $|V(G)|$. The set of vertices which are adjacent from, respectively to, a given vertex $v$ is denoted by $N^+(v)$, respectively $N^-(v)$, and its cardinality is the {\em out-degree} of $v$, $d^+(v)=|N^+(v)|$, respectively {\em in-degree} of $v$, $d^-(v)=|N^-(v)|$. A digraph is {\em diregular} of degree $d$  if for any vertex $v$, $d^+(v)=d^-(v)=d$. The  length of a shortest walk $u\to v$ is the (walk) {\em distance} from $u$ to $v$. Its maximum  over all pairs of vertices is called the {\em diameter} $k$ of the digraph. Besides, we denote by $N_i^+(v)$, $i\geq 1$, the set of vertices at distance $i$ (in $G$) from $v$. Note that $N^+_1(v)=N^+(v)$. In case, we need to consider  the vertices with these properties but belonging to a given subdigraph $H$ of $G$, it will be denoted by $N_{i,H}^+(v)$.

This paper focuses on almost Moore digraphs (also called $(d,k)$-digraphs of defect 1 or simply $(d,k)$-digraphs), that is, on diregular digraphs of degree $d>1$, diameter $k>1$ and whose order is one less than the Moore bound, that is, 
$$
N=d+d^2+\cdots + d^k.
$$
An almost Moore digraph $G$ of degree $d$ and diameter $k$ has the property that each vertex $v\in V(G)$ has a unique vertex $u\in V(G)$, called the repeat of $v$ and denoted by $r(v)$, such that there are exactly two walks from $v$ to $r(v)$ of length $\leq k$, being at least one of them of length $k$.  If $r(v)=v$, the vertex $v$ is called a self-repeat of $G$. The permutation 
$$
r:V(G) \longrightarrow V(G),
$$
which assigns to each $v\in V(G)$ its repeat $r(v)$, is an automorphism of $G$ (see \cite{baskoro96}). The smallest integer $t\geq 1$ such that $r^t(v)=v$ is called the order of $v$ and it is denote by $ord_r(v).$

Given a $(d,k)$-digraph $G$, its adjacency matrix $\A$ fulfills the equation
\begin{equation}\label{eqn:1}
 \I + \A + \cdots + \A^k = \J + \P
\end{equation}
where $\J$ denotes the all-one matrix and $\P = (p_{ij})$ is the
(0,1)-matrix associated with the permutation $r$ of the set of
vertices $V(G) = \{v_1, \ldots , v_N\}$, which is equivalent to say $p_{ij}$
= $1$ if and only if $r(v_i) = v_j$. Since $G$ has no cycles of length less than $k$, its adjacency matrix $\A$ satisfies
\begin{equation}
\label{traces1k}
\Tr(\A^{\ell})=0, \; \ell=1,2,\ldots,k-1, \mbox{ and } \Tr(\A^k)=\Tr(\P).
\end{equation}
The case $\P=\I$ was studied by Bos\'ak \cite{Bosak} who proved that for $k\geq 3$ there are no $(d,k)$-digraphs. Later, Filipovski and Jajcay studied in \cite{FJ} the $(d,k)$-digraphs with defect $\delta\geq 2$ containing only self-repeat vertices, namely, those whose adjacency matrix satisfies the equation $\I + \A + \cdots + \A^k = \J + \delta\I$, showing their non-existence for $d\geq \delta\geq k+1\geq 4$.

In general, 
for $k\geq 3$, a $(d,k)$-digraph $G$ contains either no cycles of length $k$ or exactly one cycle of length $k$ consisting of self-repeat vertices (see \cite{baskoro96}). Therefore, when $G$ has self-repeats, it has exactly $k$ self-repeats and
\begin{equation}
\label{tracek}
\Tr(\A^k)=k.
\end{equation}
The permutation $r$ has a cycle structure that corresponds to
its unique decomposition in disjoint cycles. The number of permutation
cycles of $G$ of each length  $n \leq N$, will be denoted by $m_{n}$
and the vector
\[
  (m_1,m_2, \ldots ,m_N), \quad \sum_{i=1}^N im_i=N,
\]
will be referred to as the permutation cycle structure of $G$.
Note that in the case $G$ is a $(d,k)$-digraph with self-repeats it turns out $m_1=k$.

\subsection{The cyclotomic conjecture}
\label{sec:conj}

The factorization of the characteristic polynomial of $\J+\P$ in $\mathbb Q[x]$ can be given in terms of the cyclotomic polynomials 
$$
\Phi_i(x)=\prod_{1\leq h\leq \varphi(i)} (x-\zeta_h^i),
$$
where $\zeta_h^i$ ranges over the the primitive roots of unity of order $i$ and where $\varphi(i)$ stands for Euler's function.

From (\ref{eqn:1}), the problem of the factorization of the characteristic polynomial of $\A$ in $\mathbb Q[x]$ is connected to the study of the irreducibility in $\mathbb Q[x]$ of the polynomials 
$$
F_{i,k}(x)=\Phi_i(1+x+\cdots + x^k),
$$
being $k$ the diameter of $G$.

In \cite{g99}, Gimbert proved that $F_{2,k}(x)$ is irreducible and gave for $i>2$ the following conjecture on the irreducibility of these polynomials:

\begin{conjecture}
\label{cycloconj}
Let $i>2$ and $k>1$ be integers. Then
\begin{itemize} 
\item[i)]
If $k$ is even then $F_{i,k}(x)$ is reducible in $\mathbb Q[x]$ if and only if $i \mid (k+2)$, in which case $F_{i,k}(x)$ has just two factors.
\item[ii)] 
If $k$ is odd then $F_{i,k}(x)$ is irreducible in $\mathbb Q[x]$ if and only if $i\mid 2(k+2)$, in which case $F_{i,k}(x)$ has just two factors.
 \end{itemize}
\end{conjecture}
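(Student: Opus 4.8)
\medskip
\noindent\textbf{Proof proposal.} Since this is Gimbert's conjecture I will describe a line of attack rather than claim a proof, and flag where it stalls.

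The first step is to move the factorisation problem from $\mathbb{Q}$ to the cyclotomic field $K=\mathbb{Q}(\zeta_i)$. Writing $\Phi_i(y)=\prod_{\zeta}(y-\zeta)$, the product over the primitive $i$-th roots of unity $\zeta$, and substituting $y=1+x+\cdots+x^k$ yields
\[
F_{i,k}(x)=\prod_{\zeta}h_\zeta(x),\qquad h_\zeta(x):=1+x+\cdots+x^k-\zeta\in K[x],
\]
a product of $\varphi(i)$ polynomials of degree $k$. Since $h_\zeta-h_{\zeta'}$ is a nonzero constant, $\gcd(h_\zeta,h_{\zeta'})=1$ for $\zeta\neq\zeta'$, so any automorphism of $\overline{\mathbb{Q}}$ fixing an irreducible factor of $h_\zeta$ must fix $\zeta$; hence $\operatorname{Gal}(K/\mathbb{Q})\cong(\mathbb{Z}/i)^{\times}$ permutes the blocks $\{h_\zeta\}$ freely and transitively. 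Moreover each root $\alpha$ of $h_\zeta$ satisfies $\zeta=1+\alpha+\cdots+\alpha^k\in\mathbb{Q}(\alpha)$, so $K\subseteq\mathbb{Q}(\alpha)$. Combining these, the irreducible factors of $F_{i,k}$ over $\mathbb{Q}$ correspond bijectively to those of $h_\zeta$ over $K$, a $\mathbb{Q}$-factor of degree $e\varphi(i)$ matching a $K$-factor of degree $e$. So the conjecture is equivalent to the local statement that $h_\zeta$ is irreducible over $K$ except exactly when $i\mid k+2$ ($k$ even) or $i\mid 2(k+2)$ ($k$ odd), in which cases it has precisely two irreducible factors over $K$.

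Next I would identify the distinguished factor that splits off. Writing $h_\zeta(x)=\bigl(x^{k+1}-\zeta x+(\zeta-1)\bigr)/(x-1)$, a root $\alpha$ on the unit circle forces $|\alpha^{k+1}-1|=|\alpha-1|$, hence $\alpha^{k+1}=\alpha$ (giving $1+\alpha+\cdots+\alpha^k=1$, impossible for $i>1$) or $\alpha^{k+1}=\alpha^{-1}$, i.e.\ $\alpha^{k+2}=1$ and then $\zeta=-\alpha^{-1}$. Running this backwards, $h_\zeta$ has a root which is a root of unity exactly when $(-\zeta^{-1})^{k+2}=1$, and this unwinds to $i\mid k+2$ for $k$ even and to $i\mid 2(k+2)$ (with $i\nmid k+2$) for $k$ odd. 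In those cases the numbers $-\zeta^{-1}$ form a single $\operatorname{Gal}(\overline{\mathbb{Q}}/\mathbb{Q})$-orbit, hence are the roots of one cyclotomic polynomial of degree $\varphi(i)$ dividing $F_{i,k}$; equivalently $h_\zeta$ acquires the linear factor $x+\zeta^{-1}$ over $K$, exactly as in $F_{4,2}=\Phi_4(x)\cdot\bigl((x+1)^2+1\bigr)$. The one residual subcase, $k$ odd with $i\mid k+2$, has no root-of-unity roots; there I would exploit the identity $1+x+\cdots+x^k=\bigl(1+x+\cdots+x^{(k-1)/2}\bigr)\bigl(1+x^{(k+1)/2}\bigr)$, valid for odd $k$, which presents $F_{i,k}$ as $\Phi_i$ evaluated at a polynomial of the shape $g(x^2)(1+x)$, and extract the second factor from the resulting $x\mapsto-x$ symmetry over a quadratic extension of $K$ descending back to $K$.

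The crux — and the reason the conjecture is still open — is the converse: showing the complementary factor (degree $k-1$ over $K$) is \emph{irreducible}, and that in all non-divisible cases $h_\zeta$ itself is. I would attack this by a Newton-polygon and ramification analysis of the trinomial $x^{k+1}-\zeta x+(\zeta-1)$ at the primes of $K$ above the rational primes dividing $k+1$ or $k+2$, aiming to exhibit a prime at which the relevant factor becomes Eisenstein-type (totally ramified), which pins down its degree and forbids further splitting. In parallel one can try to bound $\operatorname{Gal}(h_\zeta/K)$ as a transitive subgroup of $S_k$ (resp.\ of $S_{k-1}$ on the non-cyclotomic roots) by induction on $i$ along the tower $\mathbb{Q}\subset\mathbb{Q}(\zeta_p)\subset\mathbb{Q}(\zeta_i)$ for primes $p\mid i$, taking Gimbert's irreducibility of $F_{2,k}$ \cite{g99} as the base case. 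The small-case verifications mentioned in the abstract should both suggest the right ramified prime and catch off-by-one errors in the divisibility conditions. I would not expect a clean unconditional proof to fall out of this, but the reduction to $h_\zeta$ over $K$ plus the local analysis is, in my view, the right framework, and it already looks tractable for a fixed $i$, or for $i$ prime.
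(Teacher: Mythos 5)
The statement you were asked about is not a theorem of the paper: it is Gimbert's cyclotomic conjecture, which the paper explicitly leaves open and only supports by a finite computation (Remark~\ref{rem}: factoring $F_{i,k}(x)$ for $2<i\leq 14$, $4<k\leq 200$), which is all that is needed for the paper's applications. So there is no proof in the paper to compare against, and your proposal — honestly flagged as a line of attack rather than a proof — does not close the gap either. To its credit, the reduction you set up is sound and is the standard framework: since every root $\alpha$ of $h_\zeta(x)=1+x+\cdots+x^k-\zeta$ satisfies $\zeta\in\mathbb{Q}(\alpha)$, the $\mathbb{Q}$-factorization of $F_{i,k}$ is controlled by the factorization of $h_\zeta$ over $\mathbb{Q}(\zeta_i)$; and your root-of-unity computation correctly identifies the cyclotomic factor (roots $-\zeta^{-1}$, condition $(-\zeta^{-1})^{k+2}=1$, i.e.\ $i\mid k+2$ for $k$ even, $i\mid 2(k+2)$, $i\nmid k+2$, for $k$ odd), which matches the known reducible cases and in effect corrects the apparent typo in item~ii) of the statement (``irreducible'' should read ``reducible''). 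This, however, only gives the easy direction: existence of a proper factor when the divisibility condition holds.

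The genuine gap is exactly where you say it is, and it is the whole content of the conjecture: proving that $h_\zeta$ is irreducible over $\mathbb{Q}(\zeta_i)$ in all non-divisible cases, and that the complementary degree-$(k-1)$ factor is irreducible in the divisible ones. Your proposed tools do not yet amount to an argument: no prime of $\mathbb{Q}(\zeta_i)$ is exhibited at which $x^{k+1}-\zeta x+(\zeta-1)$ (or the relevant factor) is Eisenstein-like, and the trinomial is generically not totally ramified at primes above $k+1$ or $k+2$, so the Newton-polygon step is a hope rather than a lemma; likewise the ``$x\mapsto -x$ symmetry'' you invoke for $k$ odd, $i\mid k+2$ is not developed into an actual construction of a second factor over $\mathbb{Q}(\zeta_i)$, and the induction along $\mathbb{Q}\subset\mathbb{Q}(\zeta_p)\subset\mathbb{Q}(\zeta_i)$ has no mechanism for propagating irreducibility. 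In the context of this paper the honest conclusion is: treat the statement as a conjecture, and note that for the pairs $(i,k)$ actually used (as in the proof of the final corollary) a direct machine factorization of $F_{i,k}(x)$, as in Remark~\ref{rem}, is the proof that is really being relied upon.
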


Given a pair of integers $(d,k)$, with $d$ the degree and $k$ the diameter, and a permutation cycle structure $(m_1,m_2,\ldots, m_N)$, if the cyclotomic conjecture were true for the polynomials $F_{i,k}(x)$, for those $i$ such that $m(i)\neq 0$, with $m(i)=\sum_{i|j}m_j$, then $(d,k)$-digraphs with such a permutation cycle structure would not exist \cite{cggmm14}. 

\begin{remark}
\label{rem}
We have checked the factorization of $F_{i,k}(x)$ in $\mathbf{Q}[x]$ for $2<i\leq 14$ and $4<k\leq 200$, verifying the Conjecture \ref{cycloconj} for all these values.
\end{remark}

\section{Subdigraphs of a $(d,k)$-digraph  induced by the fixed vertices of an automorphism}
The main aim of this section is to show that any almost Moore digraph contains a subdigraph which is also an almost Moore digraph having a very specific permutation cycle structure. To do so we analyse the subdigraph of fixed vertices of some automorphism following a very similar line of reasoning as in \cite{fixedpoints}. The first key point in our argument is the following result.

\begin{lemma}
\label{lem:1}
Let $G$ be a $(d,k)$-digraph and let $\varphi$ be an automorphism of $G$. If $u$, $v$ are two distinct fixed vertices by $\varphi$, then any walk of length at most $k$ connecting $u$ with $v$ is fixed by $\varphi^2$.
\end{lemma}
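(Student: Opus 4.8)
The plan is to reduce the statement to the elementary fact that every permutation of a set with at most two elements is an involution. The key input is the characterizing property of $(d,k)$-digraphs recalled above: for any two distinct vertices $u,v$ there are at most two walks of length at most $k$ from $u$ to $v$ --- exactly one if $v\neq r(u)$, and exactly two if $v=r(u)$. (If one wants this self-contained, it follows from a pigeonhole count: $G$ is diregular of degree $d$, so there are $1+d+\cdots+d^k = N+1$ walks of length $\leq k$ issuing from $u$; since $G$ has diameter $k$, each of the $N$ vertices is the terminus of at least one of them, so exactly one vertex is the terminus of two and every other of exactly one.)

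With this in hand, the next step is: fix distinct vertices $u,v$ with $\varphi(u)=u$ and $\varphi(v)=v$, and let $\mathcal{W}$ be the set of walks of length at most $k$ from $u$ to $v$, so $|\mathcal{W}|\leq 2$. I would then observe that $\varphi$ permutes $\mathcal{W}$: an automorphism sends a walk $u=w_0\to w_1\to\cdots\to w_\ell=v$ to the walk $\varphi(w_0)\to\cdots\to\varphi(w_\ell)$, which has the same length and, since $\varphi$ fixes $u$ and $v$, again joins $u$ to $v$; applying this to $\varphi$ and to $\varphi^{-1}$ shows $\varphi|_{\mathcal{W}}$ is a bijection of the finite set $\mathcal{W}$.

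Finally, since $|\mathcal{W}|\leq 2$, the permutation $\varphi|_{\mathcal{W}}$ has order dividing $2$, hence $\varphi^2$ fixes every element of $\mathcal{W}$, which is exactly the assertion. I do not expect a genuine obstacle here; the only point that needs care is the case $v=r(u)$, the sole situation in which $\mathcal{W}$ has two elements --- there $\varphi$ may actually interchange the two walks rather than fix each of them, which is precisely why the lemma claims invariance under $\varphi^2$ and not under $\varphi$ itself. (In the complementary case $v\neq r(u)$, the unique walk is in fact already fixed by $\varphi$.)
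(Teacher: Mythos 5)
Your proposal is correct and follows essentially the same route as the paper: both arguments rest on the fact that between two distinct vertices of a $(d,k)$-digraph there are at most two walks of length at most $k$ (exactly two only when $v=r(u)$), so an automorphism fixing $u$ and $v$ permutes this set of at most two walks and hence its square fixes each walk. Your phrasing via ``a permutation of a set of size at most two is an involution'' is just a compact repackaging of the paper's explicit case analysis, so there is nothing further to add.
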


\begin{proof} 
Since $G$ is connected by hypothesis and its diameter is $k$, there exists at least one shortest walk connecting $u$ with $v$ of length at most $k$. Let $\pi$ be such a walk. 
We distinguish two cases: either $v \neq r(u)$ and then such walk is unique, or $v = r(u)$ and then there exists exactly one other shortest walk $\pi'$ of length at most $k$ connecting $u$ with $v$.

In the first case, when $v\neq r(u)$, $\varphi(\pi)$ is a path of length 
equal to the length of $\pi$
and it connects $\varphi(u) = u$ with $\varphi(v) = v$. Therefore,  by the uniqueness of $\pi$, we must have $\varphi(\pi) = \pi$. Hence, $\pi = \varphi^2(\pi) $. 

In the second case, when $v = r(u)$, $\varphi(\pi)$ is a walk of length at most $k$ connecting $\varphi(u) = u$ and $\varphi(v) = v$, so either $\varphi(\pi) = \pi$, and then $\varphi^2(\pi) = \pi$ holds as we wanted to see, or, otherwise, 
$\varphi(\pi) = \pi'$. 
When $\varphi(\pi) = \pi'$, $\pi$ and $\pi'$ must both be of length $k$, and either $\varphi(\pi') = \pi'$ or $\varphi(\pi') = \pi$.
However, it cannot be $\varphi(\pi') = \pi'$ because $\pi' = \varphi(\pi)$, so $\varphi$ would not be a digraph automorphism. Therefore we must have $\varphi(\pi') = \pi$ which implies $\pi = \varphi(\varphi(\pi)) = \varphi^2(\pi)$.  
\end{proof}

\subsection{The subdigraphs $H_{\alpha}$}

We will study  certain subdigraphs of a $(d,k)$-digraph $G$ denoted by $H_{\alpha}$. Indeed, for each positive integer $\alpha >1$ we consider the subdigraph $H_{\alpha}$ of $G$ defined as follows:
\begin{equation}
\label{Halpha}
H_{\alpha}=\{v\in G : \mbox{there exists an integer } t\geq 0 \mbox{ such that } ord_r(v) \mid 2^t \alpha\},
\end{equation}
where $r$ is the permutation of repeats of $G$.

Notice two important facts about the sets $H_{\alpha}$. The first one is that, when $G$ contains self-repeats, the subdigraph of self-repeats, which is a directed cycle $C_k$, is always contained in $H_{\alpha}$ for any  $\alpha$, that is,
$$
C_k\subseteq H_{\alpha}, \quad \forall \alpha >1.
$$
The second important observation is,  since $ord_r(v) = ord_r(r(v))$, that 
\begin{equation}
  \label{ord}
v \in H_{\alpha} \Longleftrightarrow r(v) \in H_{\alpha}.
\end{equation}

Now we introduce the concept of a \emph{$(r,k)$-closed subdigraph} which is a compact notation that is useful later for the following results.

\begin{definition}
A subdigraph $H$ of a $(d,k)$-digraph $G$ is $(r,k)$-closed when for every two distinct vertices $u,v$ from $H$ any shortest path of length at most $k$ connecting $u$ with $v$ is also contained in $H$ and, furthermore, $r(H) \subseteq H$.
\end{definition}

Let $j ,h \geq 1$ be positive integers. Since $r$ is an automorphism,  Lemma \ref{lem:1} implies that any walk connecting two fixed points $u$, $v$ by $r^{2^{j-1}h}$ of length at most $k$ is fixed by $r^{2^jh}$. This property has a key implication: when $j$ runs from $0$ to $N$ the union of all fixed points by $r^{2^jh}$ is a $(r,k)$-closed subdigraph of $G$. 
The next proposition states and proves this result.

\begin{proposition}
\label{kclosed}
Any subdigraph 
$H_{\alpha}$ of a $(d,k)$-digraph $G$ is $(r,k)$-closed.
\end{proposition}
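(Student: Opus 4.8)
The plan is to show that $H_\alpha$ satisfies the two conditions in the definition of an $(r,k)$-closed subdigraph: first that any shortest path of length at most $k$ between two distinct vertices of $H_\alpha$ stays inside $H_\alpha$, and second that $r(H_\alpha)\subseteq H_\alpha$. The second condition is immediate from observation \eqref{ord}, so the substance of the argument is the first condition.

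For the path condition, I would take two distinct vertices $u,v\in H_\alpha$. By definition of $H_\alpha$ there exist integers $s,t\geq 0$ with $ord_r(u)\mid 2^s\alpha$ and $ord_r(v)\mid 2^t\alpha$. Setting $j=\max\{s,t\}$, both $u$ and $v$ are fixed by $r^{2^j\alpha}$: indeed $ord_r(u)\mid 2^s\alpha\mid 2^j\alpha$ and similarly for $v$, so $r^{2^j\alpha}(u)=u$ and $r^{2^j\alpha}(v)=v$. Now apply Lemma \ref{lem:1} with the automorphism $\varphi=r^{2^j\alpha}$: any walk of length at most $k$ connecting the two fixed vertices $u$ and $v$ is fixed by $\varphi^2=r^{2^{j+1}\alpha}$. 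In particular, if $\pi$ is a shortest path of length at most $k$ from $u$ to $v$ and $w$ is any vertex lying on $\pi$, then $w$ is fixed by $r^{2^{j+1}\alpha}$, which means $ord_r(w)\mid 2^{j+1}\alpha$. Taking the witnessing exponent to be $j+1$ shows $w\in H_\alpha$, so the entire path $\pi$ is contained in $H_\alpha$.

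I expect the only subtle point to be making precise what "a walk is fixed by an automorphism" means and hence why it forces every vertex on the walk to be a fixed vertex — but this is exactly the content used in the proof of Lemma \ref{lem:1} (an automorphism sends a walk to a walk vertex-by-vertex, so fixing the walk as a walk fixes each of its vertices), so no new work is needed. A second minor point worth stating explicitly is that the case $u=r(v)$, where the shortest path need not be unique, is already handled by Lemma \ref{lem:1}, whose conclusion covers \emph{any} walk of length at most $k$ between the two fixed vertices, not merely a distinguished one; thus both of the finitely many such paths land in $H_\alpha$. Finally, I would remark that although the definition \eqref{Halpha} quantifies over all $t\geq 0$, in the argument above the exponent only ever increases by one at each application, so everything stays finite and well-defined; there is no need to let $j$ run all the way to $N$, though one could phrase it that way as the surrounding text suggests.

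In summary, the proof is a direct application of Lemma \ref{lem:1} together with the closure property \eqref{ord}: pick a common exponent of the form $2^j\alpha$ that fixes both endpoints, invoke the lemma to conclude the connecting path is fixed by $r^{2^{j+1}\alpha}$, and read off that each vertex of the path has order dividing $2^{j+1}\alpha$ and hence lies in $H_\alpha$. The main obstacle, such as it is, is purely organizational — choosing the right exponent and citing Lemma \ref{lem:1} with the correct automorphism — rather than any genuine difficulty.
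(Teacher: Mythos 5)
Your proof is correct and is essentially the paper's own argument: both choose the automorphism $\varphi=r^{2^{\max}\alpha}$ fixing the two endpoints, invoke Lemma \ref{lem:1} to conclude every vertex of the connecting walk is fixed by $\varphi^2=r^{2^{\max+1}\alpha}$ and hence lies in $H_\alpha$, and obtain $r(H_\alpha)\subseteq H_\alpha$ from (\ref{ord}). No substantive difference.
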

\begin{proof} Let $u,v \in H_{\alpha}$ and let $\pi$ a walk of length at most $k$ connecting $u$ with $v$. By the assumption $u,v\in H_{\alpha}$, we know that there exist non-negative integer values $e_u,e_v$ and  strictly positive integer values $\alpha_u,\alpha_v$ with $\alpha_u, \alpha_v \mid \alpha$ such that: 
$$
ord_r(u) = 2^{e_u}\alpha_u, \qquad 
ord_r(v) = 2^{e_v}\alpha_v.
$$
Then consider the automorphism $\varphi = r^{2^{\max(e_u,e_v)}\alpha}$. 
Of course, $\varphi(u) = u$ and $\varphi(v) = v$ because $ord_r(u), ord_r(v) \mid 2^{\max(e_u,e_v)\alpha}$. By Lemma \ref{lem:1}, for any $w\in \pi$, $w = \varphi^2(w)$. Therefore 
$$
ord_r(w) \mid 2 \cdot 2^{\max(e_u,e_v)}\alpha = 2^{1+ \max(e_u,e_v)}\alpha.
$$ 
This clearly implies $w \in H_{\alpha}$, too.
Finally, if $z \in H_{\alpha}$ then $ord_r(z) \mid 2^t \alpha$ for some integer $t \geq 0$. Then $ord_r(r(z)) \mid 2^t \alpha $ because $ord_r(r(z)) = ord_r(z)$ by (\ref{ord}). From here we get $r(H_{\alpha})  \subseteq H_{\alpha}$ and this completes the proof. 
\end{proof}

\subsection{Regularity of $H_{\alpha}$}

Let $v\in V(G)$ be a vertex of $G$ and let $N^+(v)$ be the set of the exact $d$ neighbours from $v$ such that $(v,v_j) \in E(G)$ for every $j$ with $1 \leq j \leq d$, that is, 
$$N^+(v) = \left\{ v_1,v_2,\ldots,v_d\right\}.
$$ 
We denote by $T(v_j)$ the subset of vertices $w$ from $G$ such that there exists a shortest walk from $v$ to $w$ that goes through $v_j$, with $1\leq j \leq d$. By definition of almost Moore digraph, 
$$T(v_1),T(v_2),\ldots,T(v_d)$$ 
are pairwise disjoint except for at most two indices $i,j$ such that $T(v_i) \cap T(v_j) = \left\{ r(v)\right\}$.  

Now, define for each $j$ with $1 \leq j \leq d$ the following values: 
$$
n(j) = \#\{(z,v) \in E(G) \mid z \in T(v_j)\}.
$$
The distance from $v_j$ to $v$ is either $k-1$ if $v=r(v)$ and $v_j=r(v_j)$ or exactly $k$. Then, for each $j$ with $1 \leq j \leq d$, it holds $n(j) \geq 1$. Furthermore, $n(j) \leq 2$. Otherwise the order of $G$ would be less than or equal $d+d^2+\cdots + d^k-1$. More than this, 
$$n(j) = 2 \Longleftrightarrow v = r(v_j),$$ 
so there is at most one subindex $j$ with $1 \leq j \leq d$ such that  $n(j)=2$. Then write $v_{-j} \in V(G)$ to be the vertex from $T(v_j)$ such that 
$$(v_{-j},v) \in E(G) \text{ when } n(j)=1.$$

And let $v_{-j,1},v_{-j,2}$ be the two distinct vertices from $T(v_j)$ such that 
$$(v_{-j,1},v),(v_{-j,2},v) \in E(G) \text{ when } n(j)=2.$$
We distinguish the following cases:

\begin{center}
\begin{figure}[h!]
  \makebox[\textwidth]{
  \includegraphics[width=0.65\paperwidth]{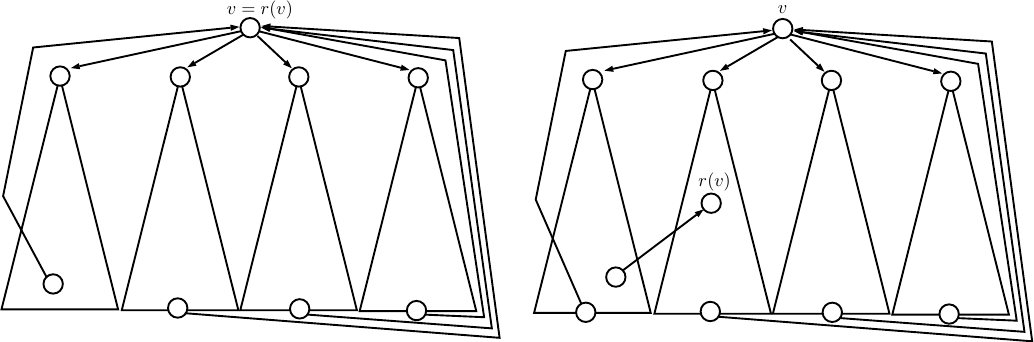}
  }
\caption{The case $n_j(1)$ for all $j$.}
\label{4scenarios-1}
\end{figure}
\end{center}
\begin{itemize}
\item[I)] $n(j)=1$ for all $j$, with $1 \leq j \leq d$. In this case it must happen that $v_{-a} \neq v_{-b}$ for any distinct values $a,b$ with $1 \leq a,b \leq d$, because $G$ is diregular of degree $d$ (see \cite{diregular}). We further distinguish two subcases: 
\begin{itemize}
\item[i)] 
$n(j)=1$ for all $j$, with $1 \leq j \leq d$, and $v$ is a self-repeat (left picture from Figure \ref{4scenarios-1}).
\item[ii)] 
$n(j)=1$ for all $j$, with $1 \leq j \leq d$, and $v$ is not a self-repeat (right picture from Figure~\ref{4scenarios-1}).
\end{itemize}

\item[II)] $n(j)=1$ for all $j$ with $1 \leq j \leq d$ except for one subindex $h$ for which $n(h)=2$. Since $G$ is $d$-regular, in this scenario it can only happen two possibilities: 

\begin{itemize}

\item[i)] $v_{-a} = v_{-b}$ for exactly one pair of distinct values $a,b$ with $1 \leq a,b \leq d$ and $a,b \neq h$  (left picture from Figure \ref{4scenarios-2}).

\item[ii)] $v_{-a} = v_{-h,1}$ or $v_{-a} = v_{-h,2}$ for exactly one value $a$ with $1 \leq a \leq d$ and $a \neq h$  (right picture from Figure \ref{4scenarios-2}).

\begin{center}
\begin{figure}[h!]
  \makebox[\textwidth]{
\includegraphics[width=0.65\paperwidth]{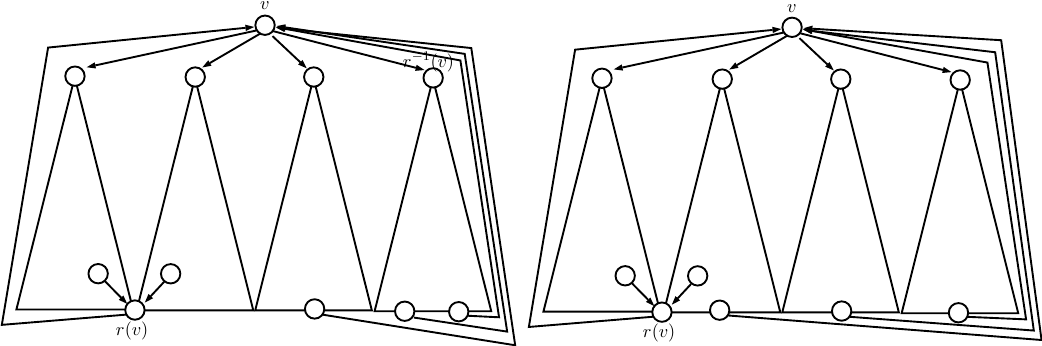}}
\caption{$n(j)=1$ for all $j$ except for on subindex.}
\label{4scenarios-2}
\end{figure}
\end{center}

\end{itemize}
\end{itemize}

As a consequence of this we have the following result.

\begin{corollary}
\label{corol:inout}
Let $H_{\alpha}$ be a subdigraph of a $(d,k)$-digraph $G$ and assume $v \in H_{\alpha}$. If $(z,v) \in E(G)$ is such that $z \in T(v_j)$ with $1 \leq j \leq d$, then $$v_j \in H_{\alpha}\Longleftrightarrow z \in H_{\alpha}.
$$
\end{corollary}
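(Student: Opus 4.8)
The plan is to establish the two implications of the biconditional separately. The implication ``$z \in H_\alpha \Rightarrow v_j \in H_\alpha$'' will follow directly from the $(r,k)$-closedness of $H_\alpha$ proved in Proposition~\ref{kclosed}, while the reverse implication will be obtained by a direct argument with a suitable power of the permutation $r$.

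For the first implication I would argue as follows. Since $z \in T(v_j)$, by the very definition of $T(v_j)$ there is a shortest walk $\pi$ from $v$ to $z$ passing through $v_j$. Its length is $d(v,z)$, which is at most $k$ since $k$ is the diameter, and $\pi$ is in fact a path because a shortest walk in a digraph with no cycles of length $<k$ cannot repeat a vertex. Moreover $z \neq v$, as $(z,v) \in E(G)$ and $G$ has no loops. Hence $\pi$ is a shortest path of length at most $k$ between two distinct vertices of $H_\alpha$, so Proposition~\ref{kclosed} gives $\pi \subseteq H_\alpha$; in particular the vertex $v_j$ of $\pi$ lies in $H_\alpha$.

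For the converse, assume $v_j \in H_\alpha$. Since $v, v_j \in H_\alpha$, I would pick an integer $T \geq 0$ with $ord_r(v) \mid 2^T\alpha$ and $ord_r(v_j) \mid 2^T\alpha$, and set $\varphi = r^{2^T\alpha}$, an automorphism of $G$ fixing both $v$ and $v_j$. The key step is to show that $\varphi$ preserves the set $I_j := \{\, w \in T(v_j) : (w,v) \in E(G)\,\}$: if $w \in I_j$ then $(\varphi(w),\varphi(v)) = (\varphi(w),v) \in E(G)$, and applying $\varphi$ to a shortest walk from $v$ to $w$ through $v_j$ gives a walk from $v$ to $\varphi(w)$ of the same length (hence shortest, because $d(v,\varphi(w)) = d(v,w)$ as $\varphi$ is an automorphism fixing $v$) passing through $\varphi(v_j) = v_j$, so $\varphi(w) \in T(v_j)$; thus $\varphi(I_j) \subseteq I_j$, and since $I_j$ is finite and $\varphi$ is injective, $\varphi$ permutes $I_j$. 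Now $|I_j| = n(j) \in \{1,2\}$ by the discussion preceding the corollary, and every permutation of a set of size at most $2$ has order dividing $2$; since $z \in I_j$, this yields $\varphi^2(z) = z$, i.e. $r^{2^{T+1}\alpha}(z) = z$, so $ord_r(z) \mid 2^{T+1}\alpha$ and therefore $z \in H_\alpha$.

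The main obstacle is the second implication, and within it the stability of $I_j$ under $\varphi$: one must check that $\varphi(w)$ is still an in-neighbour of $v$ lying in $T(v_j)$, which is precisely where the automorphism property of $\varphi$ and the fact that it fixes $v$ and $v_j$ enter, together with the shortest-walk characterization of $T(v_j)$. Once $\varphi(I_j) = I_j$ is in place, the bound $|I_j| = n(j) \le 2$ established before the corollary makes $\varphi^2(z) = z$ automatic. The first implication is, by comparison, an immediate application of Proposition~\ref{kclosed}.
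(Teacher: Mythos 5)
Your proposal is correct, but it is a hybrid: one half coincides with the paper, the other half is a genuinely different argument. The direction $z \in H_\alpha \Rightarrow v_j \in H_\alpha$ is exactly what the paper does (the shortest walk from $v$ to $z$ witnessing $z \in T(v_j)$ passes through $v_j$, has length at most $k$, and $(r,k)$-closedness with endpoints $v,z \in H_\alpha$ forces $v_j \in H_\alpha$); the paper dismisses this as ``the other way around follows similarly.'' For the direction $v_j \in H_\alpha \Rightarrow z \in H_\alpha$, however, the paper again uses closedness: since $(z,v) \in E(G)$ and $G$ has no cycles of length less than $k$, the segment of the shortest walk from $v_j$ to $z$ has length at most $k-1$, so appending the arc $(z,v)$ gives a walk of length at most $k$ from $v_j$ to $v$ through $z$, and $(r,k)$-closedness applied to the pair $v_j, v$ puts $z$ in $H_\alpha$. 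You instead bypass closedness entirely and rerun the doubling trick underlying Lemma~\ref{lem:1} at the level of vertices: the automorphism $\varphi = r^{2^{T}\alpha}$ fixes $v$ and $v_j$, hence permutes the set $I_j$ of in-neighbours of $v$ lying in $T(v_j)$, and since $|I_j| = n(j) \le 2$ you get $\varphi^2(z) = z$, i.e. $ord_r(z) \mid 2^{T+1}\alpha$. Your stability check for $I_j$ (automorphism preserves arcs and distances, and fixes $v$ and $v_j$, so images of shortest $v$-to-$w$ walks through $v_j$ are again shortest walks through $v_j$) is sound, so the argument is complete. What each approach buys: the paper's is shorter once Proposition~\ref{kclosed} is in hand, but it needs the small girth observation that $d(v,z) \ge k-1$ to bound the concatenated walk by $k$; yours avoids that bookkeeping and only uses the bound $n(j) \le 2$ plus the definition of $H_\alpha$, at the cost of essentially reproving a special case of the fixed-point mechanism rather than quoting the closedness result.
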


\begin{proof}
By definition, both $v_{j}$ and $z$ belong to $T(v_j)$. Therefore, if $r(v) = v$ and $z \in N_{k-1}^+(v)$ the walk $\pi_j$ connecting $v_j$ with $z$ is of length $k-2$, otherwise it has length $k-1$.  Taking this into account, if $v_j \in H_{\alpha}$, then the walk that starts at $v_j$ follows $\pi_j$ and then uses the arc $(z,v)$ has length at most $(k-1)+1=k$ and goes through $z$. Since $H_{\alpha}$ is $(r,k)$-closed and both $v,v_j$ belong to $H_{\alpha}$ then $z \in H_{\alpha}$, too.  The other way around follows similarly.
\end{proof}

As a direct consequence of this result we obtain:

\begin{corollary}
\label{corol1}
If $v$ is a vertex of $H_{\alpha}$ of a $(d,k)$-digraph $G$, then 
$$d^+_{H_{\alpha}}(v) = d^-_{H_{\alpha}}(v).$$ 
\end{corollary}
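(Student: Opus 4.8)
The plan is to reduce the claim to matching, count for count, the out-neighbours of $v$ lying in $H_{\alpha}$ with the in-neighbours of $v$ lying in $H_{\alpha}$, and then to run through the scenarios~I and~II listed above. Write $N^+(v)=\{v_1,\dots,v_d\}$, so $d^+_{H_{\alpha}}(v)=\#\{j: v_j\in H_{\alpha}\}$, and recall that $v$ has exactly $d$ in-neighbours since $G$ is diregular. Every in-neighbour $z$ of $v$ is distinct from $v$, so a shortest $v\to z$ walk begins with an arc to some $v_j$, i.e.\ $z\in T(v_j)$; by Corollary~\ref{corol:inout} this gives $z\in H_{\alpha}\iff v_j\in H_{\alpha}$. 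Thus the strategy is to attach each in-neighbour $z$ to the branch index $j$ with $z\in T(v_j)$ and to check that this bookkeeping is count-preserving on $H_{\alpha}$.

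In Scenario~I (all $n(j)=1$) this is immediate: the vertices $v_{-1},\dots,v_{-d}$ are pairwise distinct by diregularity, hence are exactly the in-neighbours of $v$, and $j\mapsto v_{-j}$ is a bijection $N^+(v)\to N^-(v)$ with $v_j\in H_{\alpha}\iff v_{-j}\in H_{\alpha}$ by Corollary~\ref{corol:inout}. Summing over $j$ yields $d^+_{H_{\alpha}}(v)=d^-_{H_{\alpha}}(v)$.

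In Scenario~II (say $n(h)=2$ and $n(j)=1$ for $j\ne h$) the correspondence is no longer a bijection, and this is the substantive part. I would first record two membership facts. Since $n(h)=2$ is equivalent to $v=r(v_h)$, property~\eqref{ord} applied to $v_h$ gives $v_h\in H_{\alpha}$ (because $r(v_h)=v\in H_{\alpha}$), and then Corollary~\ref{corol:inout} yields $v_{-h,1},v_{-h,2}\in H_{\alpha}$. Next, the in-neighbours of $v$ lie among $v_{-h,1}$, $v_{-h,2}$ and the $v_{-j}$ with $j\ne h$, a list of $d+1$ vertices; hence there is exactly one coincidence, and in either of the two subcases the coinciding vertex belongs to $T(v_a)\cap T(v_b)$ for two distinct indices, so it must equal $r(v)$. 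By~\eqref{ord} again $r(v)\in H_{\alpha}$, and Corollary~\ref{corol:inout} then forces the out-neighbour(s) attached to that coincidence into $H_{\alpha}$ as well. With these facts, tallying both sides concludes the argument: in the first subcase the ``surplus'' out-side terms $v_h,v_a,v_b$ all lie in $H_{\alpha}$, matching the surplus in-side terms $v_{-h,1},v_{-h,2},r(v)$, while the remaining terms match pairwise via Corollary~\ref{corol:inout}; the second subcase is the same count with two surplus terms on each side instead of three.

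The only genuine obstacle is this Scenario~II tally: one must be sure that exactly the incidences created by $n(h)=2$ and by the branch overlap are the ones that land in $H_{\alpha}$ automatically, so that the surplus on the out-side is cancelled by the surplus on the in-side. That is precisely what the two uses of~\eqref{ord} (for $v_h$ and for $r(v)$) guarantee; everything else is a direct consequence of Corollary~\ref{corol:inout}.
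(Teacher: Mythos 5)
Your proof is correct and follows essentially the same route as the paper: the same case split into scenario I and the two subcases of scenario II, with Corollary \ref{corol:inout} matching branches $T(v_j)$ and the memberships $v_h, v_a, v_b, r(v), v_{-h,1}, v_{-h,2} \in H_{\alpha}$ supplying the ``surplus'' terms on both sides. The only (harmless) cosmetic difference is that you invoke property (\ref{ord}) directly where the paper cites $(r,k)$-closedness, and you identify the coinciding in-neighbour as $r(v)$ also in subcase II.ii, which the paper does not need.
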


\begin{proof} Let $J$ be the collection of subindices $j$ with $1 \leq j \leq d$ such that $v_j \in H_{\alpha} \cap N^+(v)$. Similarly, let $J'$ be the collection of subindices $j$ such that $v_{-j} \in H_{\alpha} \cap N^-(v)$. We will distinguish the former cases:

\begin{itemize}

\item[I)] Case $n(j)=1$ for all $j$: By Corollary \ref{corol:inout} we get $J = J'$ implying our claim.
\item[II] Case $n(j)=1$ for all $j$ except for $h$:
\begin{itemize}
\item[i)]  Since $H_{\alpha}$ is $(r,k)$-closed and $v \in H_{\alpha}$, then $v_{-a} = v_{-b} = r(v) \in H_{\alpha}$ and so $v_a,v_b \in H_{\alpha}$ as well using Corollary \ref{corol:inout}. 
Similarly, $v_h = r^{-1}(v) \in H_{\alpha}$ because, again, $H_{\alpha}$ is $(r,k)$-closed and $v \in H_{\alpha}$, so $v_{-h,1},v_{-h,2} \in H_{\alpha}$ as well using Corollary \ref{corol:inout}. Therefore:
$$
\begin{array}{ll}
N_{H_{\alpha}}^+(v) & = \left\{v_j \mid j \in J\right\} = \left\{v_j \mid j \in J \setminus \left\{a,b,h\right\} \right\} \cup \left\{v_a,v_b,v_h \right\},\\
N_{H_{\alpha}}^-(v) & = \left\{v_{-j} \mid j \in J'\right\}\cup  \left\{v_{-h,1},v_{-h,2}\right\} \\
&= \left\{v_{-j} \mid j \in J' \setminus \left\{a,b\right\} \right\} \cup \left\{v_{-a},v_{-h,1},v_{-h,2} \right\}.
\end{array}
$$
By Corollary \ref{corol:inout} it turns out that $J \setminus \left\{a,b,h\right\} = J' \setminus \left\{ a, b\right\}$. 
\item[ii)]  
Here $v_h = r^{-1}(v) \in H_{\alpha}$ because $H_{\alpha}$ is $(r,k)$-closed and $v \in H_{\alpha}$, so $v_{-h,1},v_{-h,2} \in H_{\alpha}$ as well using Corollary \ref{corol:inout}. Since $v_{-a} = v_{-h,1}$ or $v_{-a} = v_{-h,2}$ in this case then $v_a \in H_{\alpha}$ as well. 
Therefore:
$$
\begin{array}{l}
N_{H_{\alpha}}^+(v)  = \left\{v_j \mid j \in J\right\} = \left\{v_j \mid j \in J \setminus \left\{a,h\right\} \right\} \cup \left\{v_a,v_h \right\},\\
N_{H_{\alpha}}^-(v) = \left\{v_{-j} \mid j \in J'\setminus \left\{a\right\} \right\}\cup \left\{v_{-h,1},v_{-h,2}\right\}.
\end{array}
$$
By Corollary \ref{corol:inout}, it turns out that  $J \setminus \left\{a,h\right\} = J' \setminus \left\{ a \right\}$.
\end{itemize}
In both cases the conclusion can be derived easily. 
\end{itemize}
Therefore, the out-degree of $v$ is equal to its in-degree and the claim follows.
\end{proof}

From this, we can prove the subdigraphs $H_{\alpha}$ are diregular.

\begin{proposition}
\label{prop:0}
Any subdigraph $H_{\alpha}$ of a $(d,k)$-digraph $G$ is diregular.
\end{proposition}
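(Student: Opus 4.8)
The plan is to show that the subdigraph $H_{\alpha}$, regarded as a digraph in its own right, satisfies a matrix identity of exactly the same shape as (\ref{eqn:1}), and then to deduce diregularity from that identity by the same commutation argument that proves almost Moore digraphs are diregular \cite{diregular}.

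The key step is to prove that \emph{if $u,v\in H_{\alpha}$, then every walk of $G$ of length at most $k$ from $u$ to $v$ lies entirely inside $H_{\alpha}$}. This is exactly what Lemma \ref{lem:1} provides: since $u,v\in H_{\alpha}$ there is an integer $m\ge 0$ with $ord_r(u)\mid 2^{m}\alpha$ and $ord_r(v)\mid 2^{m}\alpha$, so $\varphi=r^{2^{m}\alpha}$ is an automorphism of $G$ fixing both $u$ and $v$; by Lemma \ref{lem:1} any walk $\pi$ of length at most $k$ from $u$ to $v$ is fixed by $\varphi^{2}$, hence every vertex $w$ on $\pi$ satisfies $ord_r(w)\mid 2^{m+1}\alpha$, i.e.\ $w\in H_{\alpha}$, and therefore every arc of $\pi$ is an arc of $H_{\alpha}$. (As a byproduct, $H_{\alpha}$ is strongly connected, of diameter at most $k$.) It follows that, for $u,v\in H_{\alpha}$, the walks of length at most $k$ from $u$ to $v$ inside $H_{\alpha}$ are exactly those inside $G$. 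Writing $\B$ for the adjacency matrix of $H_{\alpha}$ and $h=|V(H_{\alpha})|$, and combining this with (\ref{eqn:1}) and with $r(H_{\alpha})=H_{\alpha}$ (which follows from (\ref{ord})), I obtain
$$
\I+\B+\B^{2}+\cdots+\B^{k}=\J+\P',
$$
where now $\I$ and $\J$ denote the $h\times h$ identity and all-one matrices and $\P'$ is the permutation matrix of $r|_{H_{\alpha}}$, which is an automorphism of $H_{\alpha}$.

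From this identity I would proceed as in \cite{diregular}: left- and right-multiplying it by $\B$ both yield $\B+\B^{2}+\cdots+\B^{k+1}$, so $\B$ commutes with $\J+\P'$. Comparing the $(v,w)$ entries of $\B\J+\B\P'$ and $\J\B+\P'\B$ gives $d^{+}_{H_{\alpha}}(v)+\B_{v,r^{-1}(w)}=d^{-}_{H_{\alpha}}(w)+\B_{r(v),w}$, and summing over $w\in V(H_{\alpha})$ --- using that $r^{-1}$ permutes $V(H_{\alpha})$, that $\sum_{w}d^{-}_{H_{\alpha}}(w)=|E(H_{\alpha})|$, and that the automorphism $r|_{H_{\alpha}}$ preserves out-degrees, so that $d^{+}_{H_{\alpha}}(r(v))=d^{+}_{H_{\alpha}}(v)$ --- this collapses to $h\,d^{+}_{H_{\alpha}}(v)=|E(H_{\alpha})|$ for every $v\in V(H_{\alpha})$. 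Hence all out-degrees in $H_{\alpha}$ coincide, and by Corollary \ref{corol1} the in-degree of each vertex of $H_{\alpha}$ equals its out-degree; so $H_{\alpha}$ is diregular.

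I expect the only real difficulty to be the walk-containment claim of the second paragraph --- that short walks joining two vertices of $H_{\alpha}$ cannot leave $H_{\alpha}$ --- but since this is precisely the content of Lemma \ref{lem:1}, the substance of the argument was already in place once Lemma \ref{lem:1} and Proposition \ref{kclosed} were established; the rest is the routine commutation computation.
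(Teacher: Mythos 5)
Your argument is correct in substance, but it takes a genuinely different route from the paper's. The paper proves Proposition \ref{prop:0} locally and combinatorially: fixing $v\in H_{\alpha}$ with exactly $s$ out-neighbours in $H_{\alpha}$, it studies the sets $W_1(j)$ of arcs entering a chosen out-neighbour $v_1$ from the trees $T(v_j)$ and, using the $(r,k)$-closedness of $H_{\alpha}$, counts $|N^-(v_1)\cap\overline{H}_{\alpha}|=d-s$; hence $d^-_{H_{\alpha}}(v_1)=s=d^+_{H_{\alpha}}(v)$, and Corollary \ref{corol1} plus connectivity propagate the common degree to all of $H_{\alpha}$. You instead globalize: you show that the induced subdigraph $H_{\alpha}$ satisfies its own equation $\I+\B+\cdots+\B^k=\J+\P'$ and then run the commutation argument in the spirit of \cite{diregular}. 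The front end of your route leans entirely on the walk-containment property (Proposition \ref{kclosed}), but it buys more: once the matrix equation is in place you get, essentially for free, the bounded diameter (the paper's Proposition \ref{lemm:2}) and the fact that $H_{\alpha}\neq C_k$ is itself a $(d',k)$-digraph; moreover, Corollary \ref{corol1} is not really needed, since summing your commutation identity over $v$ instead of $w$ yields constancy of the in-degrees by the same token.

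One point needs patching before the identity $\I+\B+\cdots+\B^k=\J+\P'$ is fully justified: Lemma \ref{lem:1} and Proposition \ref{kclosed} only concern walks between two \emph{distinct} vertices of $H_{\alpha}$, so your claim that walks of length at most $k$ between vertices of $H_{\alpha}$ are the same inside $H_{\alpha}$ as inside $G$ does not yet cover the diagonal entries. The repair is one sentence: by (\ref{eqn:1}) and (\ref{traces1k}), a closed walk of length at most $k$ through $u$ exists in $G$ only when $u$ is a self-repeat, and then it is unique, has length exactly $k$, and every vertex on it carries a closed $k$-walk and is therefore itself a self-repeat; since self-repeats have order $1$ under $r$, they lie in every $H_{\alpha}$, so this walk is a walk of $H_{\alpha}$ and the diagonal entries of $\I+\B+\cdots+\B^k$ and of $\J+\P'$ agree. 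With that observation added, your proof is complete and correct.
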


\begin{proof} Let $v\in H_{\alpha}$ and $N^+(v)=\{v_1,v_2,\ldots,v_d\}$. Without loss of generality, let $v_1,\ldots,v_s \in H_{\alpha}$ and $v_{s+1},\ldots,v_d \not \in H_{\alpha}$. By Corollary \ref{corol1} it is enough to show that $d^-_{H_{\alpha}}(v_1) = s$ which in turn is the same as showing 
$$|N^-(v_1) \cap \overline{H}_{\alpha}| = d-s,
$$
where $\overline{H}_{\alpha}=G\setminus H_{\alpha}.$
Now let us introduce the following sets (see Figure \ref{connec-wv1}):
$$
W_{1}(j)=\{(w,v_1) \in E(G) \mid w \in T(v_j)\}, \quad 1\leq j \leq d.
$$

\begin{figure}[h!]
   \begin{center}
    \includegraphics[scale=0.75]{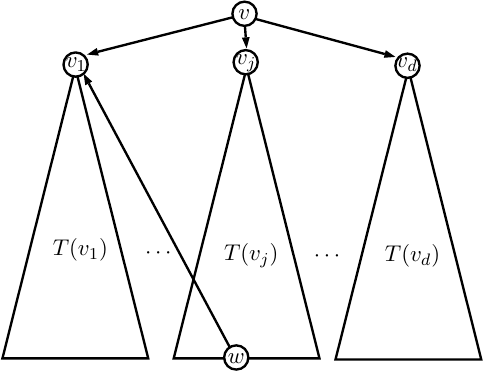}
    \caption{The connection between $w$ and $v_1$.}
    \label{connec-wv1}
    \end{center}
\end{figure}

Some key observations:
\begin{itemize}
    \item[i)] $|W_1(j)| \leq 2$ for every $j \in \left\{2,\ldots,d\right\}$ because otherwise there would be at most $N-1$ vertices in $G$, again, contradicting the definition of an almost Moore digraph.

\item[ii)] Moreover, if $|W_1(j)| = 2$ with $2 \leq j \leq d$ then $v_1 = r(v_j)$ implying $j \leq s$ because $r(H_{\alpha}) \subseteq H_{\alpha}$ since $H_{\alpha}$ is $(r,k)$-closed.

\item[iii)] If $|W_1(j)| = 0$ then $v=r(v)$ and $v_j=r(v_j)$ implying $j \leq s$ because $v \in H_{\alpha}$ together with the definition of $H_{\alpha}$.

\item[iv)] If $W_1(j_1) \cap W_1(j_2) \neq \emptyset$ for some $j_1\neq j_2$ then $T(v_{j_1}) \cap T(v_{j_2}) \neq \emptyset$. Thus we must have $T(v_{j_1}) \cap T(v_{j_2}) = \left\{r(v)\right\}$. Hence $j_1,j_2 \leq s$, because $v \in H_{\alpha}$. Moreover $r(H_{\alpha}) \subseteq H_{\alpha}$ by definition of $H_{\alpha}$. 

\item[v)] For any $w_j \in W_1(j)$, with $j$ such that $1 \leq j \leq d$, the shortest walks given by 
$$
v_j \to \cdots\to w_j \to v_1 \quad \mbox{ and }\quad v \to v_j \to \cdots \to w_j$$ 
have length at most $k$. Therefore, if $w_j \in H_{\alpha}$ then $v_j \in H_{\alpha}$ because both $v,w_j$ belong to $H_{\alpha}$ and $H_{\alpha}$ is $(r,k)$-closed. Similarly, if $v_j \in H_{\alpha}$ then $w_j \in H_{\alpha}$ because both $v_1,v_j \in H_{\alpha}$ and  $H_{\alpha}$ is $(r,k)$-closed.  
\end{itemize}
Therefore, by (i), (ii) and (iii) every $j$ with $s < j  \leq d$ must satisfy $|W_1(j)|=1$.  Then, consider 
$$
W_1(j) = \left\{w_{j} \right\}, \quad s<j\leq d.
$$
Thus, by (iv), $w_{j_1} \neq w_{j_2}$ for any two distinct values $j_1,j_2$ such that $s < j_1, j_2 \leq d$. By (v), $w_j \in \overline{H}_{\alpha}$. Therefore, we have that $|N^-(v_1) \cap \overline{H}_{\alpha}| \geq d-s$. 
On the other hand, by (v) if $w_j \in N^-(v_1) \cap \overline{H}_{\alpha}$ then $v_j \in \overline{H}_{\alpha}$, which is the same as saying $s < j \leq d$. From here $d-s \geq |N^-(v_1) \cap \overline{H}_{\alpha}|$.
Combining these results we reach to the conclusion that $|N^-(v_1) \cap \overline{H}_{\alpha}| = d-s$ as claimed. 
\end{proof}

\subsection{The subdigraphs $H_{\alpha}$ different from $C_k$ are almost Moore digraphs}

Now we show that any subdigraph $H_{\alpha}$ of a $(d,k)$-digraph, $H_{\alpha}\neq C_k$, is also a $(d',k)$-digraph, with $d'\leq d$.

\begin{lemma}
\label{lemm:1}
Suppose that $H_{\alpha}$ is $d'$-regular. Then for any vertex $u \in H_{\alpha}$ there exist $d'^j$ vertices of $H_{\alpha}$ at distance $j$ from $u$ in $G$ if $ j < k$ and $d'^k-1$ vertices of $H_{\alpha}$ at distance $k$ from $u$ in $G$. 
\end{lemma}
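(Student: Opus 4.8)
The plan is to count, for a fixed vertex $u \in H_\alpha$, the vertices of $H_\alpha$ reachable from $u$ by shortest walks of each given length $j \le k$, exploiting the fact (Proposition~\ref{prop:0}) that $H_\alpha$ is $d'$-regular together with the tree-like branching structure inherited from $G$ being an almost Moore digraph. Recall that in $G$ the balls grow perfectly: for $j < k$ there are exactly $d^j$ vertices at distance $j$ from $u$, and at distance $k$ there are exactly $d^k - 1$ of them (one fewer because $r(u)$ is reached by two walks, one of length $k$). The out-neighbours $N^+(u) = \{v_1,\ldots,v_d\}$ partition the reachable set via the sets $T(v_1),\ldots,T(v_d)$, which are pairwise disjoint except that two of them may share exactly the vertex $r(u)$.

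First I would argue that the part of $T(v_i)$ reachable from $u$ through $v_i$ is, for each $i$, a ``perfect'' out-branch: the $d^{j-1}$ vertices of $T(v_i)$ at distance $j$ from $u$ (for $1 \le j \le k-1$) are exactly the vertices at distance $j-1$ from $v_i$, and by the same Moore-type counting applied recursively this set has the expected size, with the single defect (the pair of walks to $r(u)$) localised in one or two of the branches. Next, the key step: I would show that $v_i \in H_\alpha$ if and only if the entire reachable portion of $T(v_i)$ lies in $H_\alpha$. The ``only if'' direction uses that $H_\alpha$ is $(r,k)$-closed (Proposition~\ref{kclosed}): if $v_i \in H_\alpha$ and $w \in T(v_i)$ is reached from $u$ by a shortest walk of length $\le k$ through $v_i$, then the subwalk $v_i \to \cdots \to w$ has length $\le k-1$, and repeatedly invoking Corollary~\ref{corol:inout} along this walk (or, more directly, the $(r,k)$-closedness applied to the pair $u,w$) forces $w \in H_\alpha$. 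The ``if'' direction is immediate since $v_i \in T(v_i)$.

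Combining these two points: since $H_\alpha$ is $d'$-regular, exactly $d'$ of the $v_i$ lie in $H_\alpha$, and $N^+_{H_\alpha}(u) = \{v_i : v_i \in H_\alpha\}$; the vertices of $H_\alpha$ at distance $j$ from $u$ in $G$ are precisely the union, over those $d'$ indices $i$, of the distance-$(j-1)$ vertices of $T(v_i) \cap H_\alpha$ reachable from $v_i$. An induction on $j$ then gives $d'^{\,j}$ such vertices for $j < k$: the base case $j = 1$ is the definition of $d'$-regularity, and the inductive step replaces each of the $d'$ neighbours by its own perfect branch of size $d'^{\,j-1}$ inside $H_\alpha$, these branches being disjoint (disjointness of the $T(v_i)$, and the only possible overlap $r(u)$ occurs at distance exactly $k$, not before). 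For $j = k$ the same argument yields $d'^{\,k}$ minus a correction of $1$: the overlap vertex $r(u)$ — which is the unique vertex reached by two distinct shortest walks of length $\le k$ from $u$ — lies in $H_\alpha$ by $(r,k)$-closedness ($r(H_\alpha) \subseteq H_\alpha$, or simply $u \in H_\alpha \Rightarrow r(u) \in H_\alpha$ by \eqref{ord}), so it is counted once too often in the naive total $d'^{\,k}$, giving $d'^{\,k} - 1$.

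The main obstacle I anticipate is bookkeeping the defect: one must verify carefully that the two shortest walks from $u$ to $r(u)$ both stay inside $H_\alpha$ and both land in branches $T(v_i)$ with $v_i \in H_\alpha$ (so that $r(u)$ is genuinely double-counted within $H_\alpha$ and not, say, once inside and once outside), and that this is the only source of over- or under-counting — i.e.\ that no further coincidences among the branches of $H_\alpha$ can arise. This is exactly where $(r,k)$-closedness and the case analysis (cases I and II) preceding Corollary~\ref{corol1} do the work: they guarantee that whenever a branch containing part of the defect meets $H_\alpha$, the relevant neighbours $v_i, v_{-h,1}, v_{-h,2}$ and their $r$-images are all pulled into $H_\alpha$ as well, so the local picture around $u$ inside $H_\alpha$ is a faithful miniature of the local picture in $G$, with defect exactly $1$. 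Once that is pinned down, the counting is a routine induction.
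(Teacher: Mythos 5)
Your overall skeleton (a level-by-level induction, a factor $d'$ coming from diregularity of $H_{\alpha}$, and a correction of $1$ at level $k$ coming from $r(u)\in H_{\alpha}$) is the same as the paper's, but the step you single out as key is false. You claim that $v_i\in H_{\alpha}$ forces the entire reachable portion of $T(v_i)$ to lie in $H_{\alpha}$. This fails whenever $d'<d$: each $T(v_i)$ contains on the order of $1+d+\cdots+d^{k-1}$ vertices of $G$, so $d'$ such full branches cannot fit inside $H_{\alpha}$, whose order is only $d'+d'^2+\cdots+d'^k$. The justification you offer is also circular: the $(r,k)$-closedness of Proposition~\ref{kclosed} lets you place a walk inside $H_{\alpha}$ only when \emph{both} of its endpoints are already known to lie in $H_{\alpha}$, so it cannot be applied to the pair $u,w$ to conclude $w\in H_{\alpha}$; and Corollary~\ref{corol:inout} concerns only the particular in-neighbours $z$ of $v$ sitting at the far end of $T(v_j)$, not arbitrary intermediate vertices of the branch. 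Note moreover that if your key step were true it would produce $d^{\,j-1}$ (not $d'^{\,j-1}$) vertices per branch, contradicting the very count you want — a sign that any branch-by-branch route must work with $T(v_i)\cap H_{\alpha}$, never with $T(v_i)$ itself, and then the claimed ``perfect branch of size $d'^{\,j-1}$ inside $H_{\alpha}$'' is exactly what remains unproved in your write-up.

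What actually carries the proof is much less: by Proposition~\ref{prop:0} every vertex of $H_{\alpha}$ has exactly $d'$ out-neighbours in $H_{\alpha}$; since the only vertex of $G$ reached by two walks of length at most $k$ from $u$ is $r(u)$, and at least one of those walks has length $k$, the out-neighbours in $H_{\alpha}$ of the $d'^{\,j}$ vertices of $H_{\alpha}$ at distance $j$ are pairwise distinct and lie at distance $j+1$ as long as $j+1\leq k-1$, giving $d'^{\,j+1}$; at the last step exactly one of the $d'^{\,k}$ endpoints fails to be a new distance-$k$ vertex — either because both $u\to r(u)$ walks have length $k$ and coincide in endpoint, or because $r(u)$ already appeared at an earlier level — and this loss of exactly one really occurs inside $H_{\alpha}$ because $r(u)\in H_{\alpha}$ by (\ref{ord}) and both $u\to r(u)$ walks stay in $H_{\alpha}$. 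In particular your remark that ``the only possible overlap $r(u)$ occurs at distance exactly $k$, not before'' is not accurate: $r(u)$ may be at distance less than $k$ from $u$, and then the $-1$ comes from a length-$k$ walk landing on an earlier level rather than from a double count at level $k$. With the defect handled in this uniform way, the induction goes through, which is essentially the paper's (much shorter) argument.
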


\begin{proof}
This claim follows easily applying induction. Indeed, if there are $d'^j$ vertices at distance $1 \leq j<k-1$ from $u$, by Proposition \ref{prop:0}, $H_{\alpha}$ is $d'$-regular, so there are $d' \cdot d'^j = d'^{j+1}$ vertices at distance $j+1$ from $u$. Therefore, in particular, there are $d'^{k-1}$ vertices at distance $k-1$ from $u$, implying that there are $d'\cdot d'^{k-1}-1 = d'^{k}-1$ vertices at distance $k$ from $u$ because $r(u) \in H_{\alpha}$ since $ord_r(u) = ord_r(r(u))$. 
\end{proof}

\begin{proposition}
\label{lemm:2}
Any subdigraph $H_{\alpha}$ of a $(d,k)$-digraph $G$ has diameter at most $k$. 
\end{proposition}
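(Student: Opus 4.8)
The plan is to obtain the bound directly from the two facts already in hand: that $G$ is strongly connected of diameter $k$, and that $H_{\alpha}$ is $(r,k)$-closed by Proposition~\ref{kclosed}. No further counting is needed for the estimate itself.

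First I would fix two distinct vertices $u,v\in H_{\alpha}$. Since $G$ has diameter $k$, there is a walk from $u$ to $v$ in $G$ of length at most $k$; a shortest such walk $\pi$ is automatically a path, since a repeated vertex could be excised to shorten it, and its length equals the distance from $u$ to $v$ in $G$, which is at most $k$. Hence $\pi$ is a shortest path of length at most $k$ connecting $u$ with $v$, so by Proposition~\ref{kclosed} it is contained in $H_{\alpha}$. In particular every arc of $\pi$ is an arc of the induced subdigraph $H_{\alpha}$, so $\pi$ is a walk of length at most $k$ from $u$ to $v$ inside $H_{\alpha}$. Applying the same reasoning to the ordered pair $(v,u)$ shows that $H_{\alpha}$ is strongly connected, so its diameter is well defined; and since $u$ and $v$ were arbitrary distinct vertices of $H_{\alpha}$ (the case $u=v$ contributing distance $0$), that diameter is at most $k$.

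The two points that call for a word of care are the following. A shortest walk in a digraph is a path, so it falls under the ``shortest path of length at most $k$'' clause in the definition of an $(r,k)$-closed subdigraph; and in the exceptional situation $v=r(u)$ there are two shortest walks of length at most $k$ from $u$ to $v$, but a single one already yields the bound (and in fact both lie in $H_{\alpha}$ by $(r,k)$-closedness). I do not expect a genuine obstacle here, since the substance has been packed into Proposition~\ref{kclosed}. Finally, note that Lemma~\ref{lemm:1} and Proposition~\ref{prop:0} are not needed for the diameter estimate itself; combined with it, they will be used afterwards to count the vertices of $H_{\alpha}$ and thereby recognise $H_{\alpha}$, when $H_{\alpha}\neq C_k$, as a $(d',k)$-digraph with $d'\le d$.
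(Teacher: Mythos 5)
Your proof is correct and follows essentially the same route as the paper: fix $u,v\in H_{\alpha}$, use the diameter $k$ of $G$ to obtain a shortest walk of length at most $k$, and invoke the $(r,k)$-closedness of $H_{\alpha}$ (Proposition~\ref{kclosed}) to conclude the walk lies in $H_{\alpha}$, so the distance inside $H_{\alpha}$ is at most $k$. Your extra remarks (shortest walks are paths, the $v=r(u)$ case) are harmless clarifications of the same argument.
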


\begin{proof}
Let $u,v$ be vertices of $H_{\alpha}$. Since $H_{\alpha} \subseteq G$ the distance from $u$ to $v$ is at most $k$ in $G$, implying that there exists a walk $\pi$ connecting $u$ with $v$ of length at most $k$. Since $H_{\alpha}$ is $(r,k)$-closed by Proposition  \ref{kclosed} then $\pi$ is contained inside $H_{\alpha}$, too. Therefore, the distance from $u$ to $v$ inside $H_{\alpha}$ is at most $k$ as well. 
\end{proof}

\begin{corollary}
\label{cor3}
If $H_{\alpha} \neq C_k$ then it has diameter $k$.  
\end{corollary}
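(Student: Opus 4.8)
The plan is to show that a subdigraph $H_\alpha \neq C_k$ already achieves diameter exactly $k$, by combining the regularity of $H_\alpha$ (Proposition \ref{prop:0}), the counting Lemma \ref{lemm:1}, and the diameter upper bound from Proposition \ref{lemm:2}. First I would invoke Proposition \ref{prop:0} to fix the value $d'$ with $H_\alpha$ being $d'$-regular. Since $C_k \subseteq H_\alpha$ always (when $G$ has self-repeats), and $C_k$ is $1$-regular, the hypothesis $H_\alpha \neq C_k$ forces $d' \geq 2$: if $d' = 1$ then $H_\alpha$ would be a disjoint union of directed cycles, but $H_\alpha$ is connected (it contains walks between any two of its vertices by the $(r,k)$-closedness, or one argues via the strong connectivity inherited from $G$), hence a single directed cycle; the length of that cycle must be $\leq k$ by Proposition \ref{lemm:2}, and since it contains the self-repeat cycle $C_k$ it must equal $C_k$, a contradiction. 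So $d' \geq 2$.

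Next, suppose for contradiction that $H_\alpha$ has diameter $k' < k$. By Proposition \ref{lemm:2} we know $k' \leq k$, so this is the only alternative to the claim. Fix any vertex $u \in H_\alpha$. Counting vertices of $H_\alpha$ reachable from $u$ by distance, the closed ball of radius $k'$ must be all of $H_\alpha$; but by Lemma \ref{lemm:1} the number of vertices of $H_\alpha$ at distance exactly $j$ from $u$ (distance measured in $G$, which agrees with distance in $H_\alpha$ by Proposition \ref{lemm:2} and $(r,k)$-closedness) is $d'^j$ for $1 \leq j < k$ and $d'^k - 1$ for $j = k$. Hence $|H_\alpha| = d' + d'^2 + \cdots + d'^{k}$ exactly (the self-repeat $r(u)$ accounting for the $-1$ at level $k$ together with the arc structure), and in particular $H_\alpha$ genuinely has vertices at distance $k$ from $u$, namely the $d'^k - 1 \geq d'^2 - 1 \geq 3 > 0$ vertices counted at level $k$. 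This contradicts the assumption that the diameter of $H_\alpha$ is strictly less than $k$. Therefore the diameter of $H_\alpha$ equals $k$.

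The step I expect to require the most care is making Lemma \ref{lemm:1} genuinely applicable, i.e. ensuring that "distance $j$ from $u$ in $G$" coincides with "distance $j$ from $u$ in $H_\alpha$" so that the existence of a vertex at $G$-distance $k$ inside $H_\alpha$ actually witnesses $H_\alpha$-diameter $k$. This is exactly what Proposition \ref{lemm:2} together with the $(r,k)$-closedness of $H_\alpha$ (Proposition \ref{kclosed}) provides: any shortest walk in $G$ between two vertices of $H_\alpha$ lies entirely in $H_\alpha$, so $G$-distances and $H_\alpha$-distances between vertices of $H_\alpha$ agree for values $\leq k$. A secondary subtlety is the $d' \geq 2$ reduction: one must rule out the degenerate possibility $d' = 1$ cleanly, which is handled by the observation above that a $1$-regular connected digraph containing $C_k$ must be $C_k$ itself.
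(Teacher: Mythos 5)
Your proposal is correct and follows essentially the same route as the paper: use Proposition \ref{prop:0} to get $d'\geq 2$, Lemma \ref{lemm:1} to produce $d'^k-1\geq 1$ vertices of $H_\alpha$ at distance exactly $k$ from a vertex $u$ (so $d_{H_\alpha}(u,v)\geq d_G(u,v)=k$), and Proposition \ref{lemm:2} for the upper bound $k$. Your only additions are cosmetic: the argument is phrased as a contradiction rather than directly, and you spell out the $d'=1$ exclusion (which the paper leaves implicit); the small slip that a cycle of diameter $\leq k$ has length $\leq k+1$, not $\leq k$, is harmless since your containment-of-$C_k$ argument does not need it.
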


\begin{proof}
If $H_{\alpha} \neq C_k$ then there exists $d' \geq 2$ such that $H_{\alpha}$ is diregular of degree $d'$ by Proposition \ref{prop:0}. 
Now, let $u$ be any vertex from $H_{\alpha}$. From Lemma \ref{lemm:1} we know that there exist $d'^k-1 \geq d' -1$ vertices of $H_{\alpha}$ inside $N_k^+(u)$. Therefore, if $d' \geq 2$ there exists at least one vertex $v$ of $H_{\alpha}$ at distance exactly $k$ from $u$. Then $d_{H_{\alpha}}(u,v) \geq d_G(u,v) = k$ implying the conclusion. 
\end{proof}

Now, from previous results  we end up deducing the main result of this section.

\begin{theorem} Any subdigraph $H_{\alpha}$ of a $(d,k)$-digraph $G$ different from $C_k$ is a $(d',k)$-digraph, with $d'\leq d$.
\end{theorem}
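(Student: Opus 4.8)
The plan is to assemble the pieces already established for $H_\alpha$ and check that $H_\alpha \neq C_k$ forces it to satisfy all three defining conditions of an almost Moore digraph: diregularity of some degree $d' > 1$, diameter exactly $k$, and order $d' + d'^2 + \cdots + d'^k$. Most of this is immediate from what precedes. By Proposition~\ref{prop:0}, $H_\alpha$ is diregular, say of degree $d'$; since $H_\alpha$ is a subdigraph of $G$ we have $d' \leq d$. Because $H_\alpha \neq C_k$, the degree cannot be $0$ or $1$ (a diregular digraph of degree $1$ whose vertex set is $(r,k)$-closed and contains, when $G$ has self-repeats, the cycle $C_k$, or more simply one argues as in Corollary~\ref{cor3} that $d'=1$ would make $H_\alpha$ a union of directed cycles, and the $(r,k)$-closedness together with connectivity considerations pins it down to $C_k$); hence $d' \geq 2$. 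By Corollary~\ref{cor3}, since $H_\alpha \neq C_k$, its diameter is exactly $k$.

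It remains to compute the order of $H_\alpha$. Fix any vertex $u \in H_\alpha$. By Lemma~\ref{lemm:1}, for each $j$ with $1 \leq j < k$ there are exactly $d'^{\,j}$ vertices of $H_\alpha$ at distance $j$ from $u$ in $G$, and there are exactly $d'^{\,k} - 1$ vertices of $H_\alpha$ at distance $k$ from $u$ in $G$. The key point I would make explicit here is that, by Proposition~\ref{lemm:2} (and $(r,k)$-closedness), distances measured inside $H_\alpha$ agree with distances in $G$ for pairs of vertices of $H_\alpha$, so these counts are genuinely the sizes of the distance classes $N^+_{j,H_\alpha}(u)$. Moreover every vertex of $H_\alpha$ other than $u$ lies at some distance $1 \leq j \leq k$ from $u$ within $H_\alpha$, since $H_\alpha$ has diameter $k$. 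Because the out-neighbourhoods are counted with the almost-Moore defect exactly as in $G$ — the only coincidence among the trees $T(v_1),\dots,T(v_{d'})$ rooted at the out-neighbours of $u$ in $H_\alpha$ is the single repeated vertex $r(u)$, which lies in $H_\alpha$ by \eqref{ord} — these distance classes partition $V(H_\alpha) \setminus \{u\}$. Summing,
$$
|V(H_\alpha)| = 1 + \sum_{j=1}^{k-1} d'^{\,j} + \bigl(d'^{\,k} - 1\bigr) = d' + d'^2 + \cdots + d'^k,
$$
which is exactly one less than the Moore bound for degree $d'$ and diameter $k$. Hence $H_\alpha$ is a $(d',k)$-digraph with $2 \leq d' \leq d$.

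The step I expect to require the most care is the order computation — specifically, justifying that the $d'^{\,j}$ and $d'^{\,k}-1$ counts from Lemma~\ref{lemm:1} really do account for \emph{every} vertex of $H_\alpha$ with no double-counting. This needs: (a) diameter $k$ of $H_\alpha$ so that no vertex is missed; (b) that the only overlap among distance-$\leq k$ walks from $u$ inside $H_\alpha$ is at $r(u) \in H_\alpha$, i.e. that the almost-Moore "defect $1$" behaviour is inherited exactly rather than possibly increased — this is where one invokes that $H_\alpha$ sits inside the genuine almost Moore digraph $G$, so it cannot have \emph{more} coincidences than $G$ does, and Lemma~\ref{lemm:1} already encodes that it has exactly the one. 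Everything else (diregularity, the degree bound $d' \leq d$, diameter exactly $k$) is a direct citation of the results proved above.
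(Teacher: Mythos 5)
Your proposal is correct and follows essentially the same route as the paper: diregularity from Proposition~\ref{prop:0}, diameter $k$ from Corollary~\ref{cor3}, and the order count $d'+d'^2+\cdots+d'^k$ obtained by summing the distance classes given by Lemma~\ref{lemm:1}, which is exactly the step the paper dismisses as ``straightforward.'' Your extra discussion of overlaps among the trees $T(v_j)$ is harmless but unnecessary, since the distance classes from $u$ are disjoint by definition and Lemma~\ref{lemm:1} already supplies their exact sizes.
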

\begin{proof}
   We know,  by Proposition \ref{prop:0}, that $H_{\alpha}$ is diregular with degree $d'\leq d$ and its diameter, from  Corollary 
    \ref{cor3}, is $k$. Then it is straightforward by Lemma \ref{lemm:1} 
    to derive that $H_{\alpha}$  has order $d'+d'^2+\cdots +d'^k$. Hence, it is a $(d',k)$-digraph.
\end{proof}

\section{Permutation cycle structure}
In this section we will study  certain permutation cycle structures that some
digraphs $H_{\alpha}$ can have.

\begin{definition}
  \label{crit}
  A permutation cycle structure $(m_1,\ldots,m_j,\ldots,m_N)$ is said to be $2$-critical if there exists a unique  subindex $\alpha > 1$ for which the subindices $j>1$ such that $m_j \neq 0$ are those of the form $j = 2^t\alpha$ for some $t \geq 0$.
\end{definition}

This definition is crucial because as we shall see at least one subdigraph $H_{\alpha}$ of a given almost Moore digraph has this type of permutation cycle. Note that if $\alpha$ is a prime number then $H_{\alpha}$ is either the subdigraph $C_k$ or it has 2-critical structure. 





\begin{proposition} 
\label{thm:1} 
Any $(d,k)$-digraph $G$ with self-repeats contains a subdigraph $H_{\alpha}$ which has $2$-critical structure.
\end{proposition}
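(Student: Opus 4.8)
The plan rests on two facts from the previous section. By Proposition~\ref{kclosed} together with the theorem that every $H_\alpha \ne C_k$ is a $(d',k)$-digraph, any such $H_\alpha$ is again an almost Moore digraph with self-repeats, since it contains $C_k$ and hence $m_1(H_\alpha)=k$. Also, directly from~(\ref{Halpha}), a vertex $v$ lies in $H_\alpha$ if and only if the odd part of $ord_r(v)$ divides the odd part of $\alpha$; thus $H_\alpha$ depends only on the odd part of $\alpha$, and its cycle lengths $>1$ are exactly the cycle lengths $\ell>1$ of $G$ whose odd part divides that of $\alpha$. Finally, since $G$ has self-repeats it has exactly $k$ of them while $|V(G)|=d+\cdots+d^k>k$, so $G$ has nontrivial $r$-cycles, and therefore the set of odd parts of those cycle lengths is nonempty.

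The idea is to read off $\alpha$ from the odd parts of the nontrivial cycle lengths of $G$, in two cases. If some nontrivial cycle length of $G$ is a power of $2$, take $\alpha=2$: then $H_2$ is the subdigraph of all vertices of $2$-power $r$-order, it strictly contains $C_k$, so it is a $(d',k)$-digraph with self-repeats, and each of its cycle lengths $>1$ is a power of $2$, hence of the form $2^t\cdot 2$. If no nontrivial cycle length of $G$ is a power of $2$, then every cycle length $\ell>1$ of $G$ has odd part $>1$; choose among these odd parts one, $\alpha$, that is minimal for divisibility. For any cycle length $\ell>1$ of $H_\alpha$ we have $\mathrm{odd}(\ell)\mid\alpha$ with $\mathrm{odd}(\ell)$ itself an odd part of a nontrivial cycle of $G$, so minimality forces $\mathrm{odd}(\ell)=\alpha$; hence every cycle length $>1$ of $H_\alpha$ is of the form $2^t\alpha$ with $\alpha$ odd and $>1$, and $H_\alpha\ne C_k$ because it still contains a cycle of length with odd part $\alpha$. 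In both cases the cycle lengths $>1$ of $H_\alpha$ lie in $\{\alpha,2\alpha,4\alpha,\dots\}$, which is the $2$-critical pattern of Definition~\ref{crit}.

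The point needing the most care is checking that the $\alpha$ produced above is the \emph{unique} index with that property, as Definition~\ref{crit} requires; a priori one might also have, for instance, $2\alpha$ working. I would close this either by replacing $\alpha$ with the smallest cycle length actually present that has odd part $\mathrm{odd}(\alpha)$, or by a short descent to a proper subdigraph $H_\beta\subsetneq H_\alpha$ — which is again one of the subdigraphs $H_\beta$ of $G$ — and induction on the degree, the base cases $d'\le 3$ being covered by the nonexistence of $(2,k)$- and $(3,k)$-digraphs for $k\ge3$. The one genuinely delicate situation is when the minimal nontrivial cycle length of $G$ is a power of $2$ that is at least $4$ and no proper intermediate subdigraph is available: here one must rule the digraph out directly, which I would attempt via the involution $r^{2^{A-1}}$ (with $2^A$ the order of $r$), Lemma~\ref{lem:1}, and the trace conditions~(\ref{traces1k})--(\ref{tracek}). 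This last dichotomy is the heart of the proof; everything else is bookkeeping with $r$-orders through~(\ref{Halpha}).
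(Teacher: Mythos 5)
Your first two paragraphs are correct and follow essentially the paper's route: the paper's proof is a two-liner that takes $\alpha$ to be the smallest vertex order $>1$ in $G$ (such vertices exist since $G\neq C_k$) and reads off from (\ref{Halpha}) that $H_\alpha\neq C_k$ and that its cycle structure is $2$-critical. Your refinement --- noting that membership in $H_\alpha$ depends only on the odd part of $ord_r(v)$, splitting off the case where some nontrivial cycle length is a power of $2$ (take $\alpha=2$), and otherwise taking $\alpha$ to be a divisibility-minimal odd part --- is sound, and is in fact more careful than the paper's choice: with ``smallest order'' one must still rule out mixed lengths such as $6$ and $8$ coexisting in $H_6$ (note $8\mid 2^2\cdot 6$), a point the paper's one-line argument glosses over and your case split handles cleanly.

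Where you go astray is the last paragraph. The ``unique'' in Definition \ref{crit} is not a substantive condition requiring descent, induction on the degree, the nonexistence of $(2,k)$- or $(3,k)$-digraphs, or ruling out whole digraphs via $r^{2^{A-1}}$ and the trace identities; what is actually used later (Theorem \ref{corol:1}, Corollary \ref{maincor}) is only that all nontrivial cycle lengths of $H_\alpha$ share the common form $2^t\alpha$ for one index $\alpha>1$, and the uniqueness clause is best read as fixing $\alpha$ canonically (say, the least such index); under the hyper-literal reading you worry about, the paper's own proof and its remark after Definition \ref{crit} would fail as well, e.g.\ when the only nontrivial lengths are powers of $2$. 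Moreover, your proposed resolution of the ``delicate situation'' is only an announced attempt, not an argument, so if that reading really were enforced your write-up would be incomplete precisely at what you call its heart. Delete the third paragraph (or replace it by one sentence fixing $\alpha$ minimally) and your proof is complete and matches the paper's.
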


\begin{proof}
  Since $G$ is different from $C_k$, it contains vertices of order $>1$. Consider the smallest $\alpha>1$ such that there exist a vertex of order $\alpha$. Then  the subdigraph $H_{\alpha}$ is also different from $C_k$ and according to the definition (\ref{Halpha}) its permutation cycle structure is 2-critical. 
\end{proof}

Now we take a look at a reasoning used by Baskoro, Cholily and Miller in \cite{enumerations} which is extremely useful to reach the final conclusion of this section. 

\begin{proposition} (\cite[Thm. 1]{enumerations})
\label{enum}
Let $v_0$ be a self-repeat of a $(d,k)$-digraph $G$. Let $S_1 = \left\{s_0,\ldots,s_t\right\}$ be the set containing all the orders of vertices in $N^+_1(v_0)$ and let $S_m$, $m\geq 2$, be the subset of values $\lcm(s_i,s_j)$, with $s_i, s_j \in S_{m-1}$. 
If $v\in N^+_j(v_0)$, $2\leq j \leq  k$, then $ord_r(v)\in S_j$.
\end{proposition}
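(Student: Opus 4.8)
The plan is to prove Proposition~\ref{enum} by induction on the distance $j$, tracking how the order of a vertex behaves as we move one step along a shortest walk from the self-repeat $v_0$. The essential geometric input is the following observation: because $r$ is an automorphism of $G$ fixing $v_0$, if $v \in N^+_j(v_0)$ with $2 \le j \le k$, then applying $r$ to a shortest $v_0$-to-$v$ walk produces another shortest $v_0$-to-$r(v)$ walk of the same length, so $r(v) \in N^+_j(v_0)$ as well; hence each $N^+_j(v_0)$ is $r$-invariant and it makes sense to speak of the orders occurring at level $j$.

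First I would set up the base case $j=1$ trivially, since $S_1$ is by definition the set of all orders of vertices in $N^+_1(v_0)$. For the inductive step, suppose the claim holds at level $j-1$ and let $v \in N^+_j(v_0)$. The key step is to show that $v$ lies on some shortest walk from $v_0$ that passes through \emph{two} vertices $x, y \in N^+_{j-1}(v_0)$ (not necessarily distinct) whose orders $\mathrm{ord}_r(x) = s_i$ and $\mathrm{ord}_r(y) = s_\ell$ lie in $S_{j-1}$ by the inductive hypothesis, and then to argue that $\mathrm{ord}_r(v)$ divides (in fact equals, within the relevant bookkeeping) $\lcm(s_i, s_\ell)$, placing it in $S_j$. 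Concretely, pick a shortest walk $v_0 \to \cdots \to x \to v$ with $x \in N^+_{j-1}(v_0)$; then $v_0 \to \cdots \to r(x) \to r(v)$ is also a shortest walk, and one compares the walk $v_0 \to \cdots \to x \to v$ with its image under a suitable power of $r$. Setting $t = \mathrm{lcm}(\mathrm{ord}_r(x), \mathrm{ord}_r(v))$ is not quite what we want; instead one observes that $r^{s_i}$ fixes $x$, so $r^{s_i}$ sends the walk $x \to v$ to a walk $x \to r^{s_i}(v)$ of length $1$, forcing $r^{s_i}(v)$ to be a specific out-neighbour of $x$. Iterating the in-neighbour structure at level $j-1$ and using that $v$ has at most two shortest walks from $v_0$ reaching it (one of length $j$, via the almost-Moore property when $j=k$, and uniqueness when $j<k$ unless $v = r(v_0)$), one pins down $\mathrm{ord}_r(v)$ as the least common multiple of the orders of (at most) two level-$(j-1)$ predecessors of $v$ along those walks.

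The main obstacle I anticipate is the careful case analysis forced by the defect: a vertex $v$ at level $j$ may be reachable by one shortest walk or, in the exceptional situation involving $r(v_0)$ and the unique $k$-cycle of self-repeats, by two shortest walks of possibly different combinatorial type. In the generic one-walk case the argument is clean — the unique predecessor $x$ at level $j-1$ gives $\mathrm{ord}_r(v) \mid \mathrm{ord}_r(x) \cdot (\text{something})$, but to land inside $S_j = \{\lcm(s_i,s_\ell)\}$ one genuinely needs a \emph{second} predecessor, which is supplied precisely by considering the $r$-image of the walk and noting that $r(x)$ is also a level-$(j-1)$ vertex lying on a shortest walk to $r(v)$. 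Reconciling these two predecessors — showing that $\mathrm{ord}_r(v)$ divides $\lcm$ of their orders, and that both orders are in $S_{j-1}$ — is where the bookkeeping with powers of $r$, the uniqueness-of-walks property, and the automorphism property all have to be combined. I would handle the exceptional self-repeat case separately, using that the $k$-cycle of self-repeats has all orders equal to $1$, which trivially lies in every $S_m$, so it contributes nothing problematic.

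I would close by remarking that the statement as used downstream only needs the divisibility conclusion $\mathrm{ord}_r(v) \mid \lcm(s_i, s_j)$ for appropriate $s_i, s_j \in S_{j-1}$, together with closure of the family $\{S_m\}$ under $\lcm$, so even if the exact-equality version required more care, the weaker membership $\mathrm{ord}_r(v) \in S_j$ (interpreting $S_j$ as closed under divisors of its $\lcm$'s, or simply noting $S_{j-1} \subseteq S_j$ via $\lcm(s_i, s_i) = s_i$) is what propagates the induction. This matches the intended application, where one feeds the possible orders at level $1$ through $k-1$ steps of $\lcm$-closure to constrain the orders of the vertices of $H_\alpha$.
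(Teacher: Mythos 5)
You are proving a statement that the paper itself does not prove but quotes from Baskoro--Cholily--Miller \cite[Thm.~1]{enumerations}, so your argument has to stand on its own, and as written its inductive step has a genuine gap. The setup is fine: the levels $N^+_j(v_0)$ are $r$-invariant, and since $r(v_0)=v_0$ every vertex $v\neq v_0$ is reached from $v_0$ by a \emph{unique} walk of length at most $k$, so $v\in N^+_j(v_0)$ has a unique predecessor $x\in N^+_{j-1}(v_0)$, and applying $r^{ord_r(v)}$ to that walk gives $ord_r(x)\mid ord_r(v)$. The problem is your ``second predecessor''. The vertex you propose, $r(x)$, lies on the same $r$-cycle as $x$, so $ord_r(r(x))=ord_r(x)$ and $\lcm\bigl(ord_r(x),ord_r(r(x))\bigr)=ord_r(x)$; an argument built on this pair could only ever conclude $ord_r(v)=ord_r(x)$, which together with the divisibility above would force the orders to be constant along the tree and the hierarchy $S_1\subseteq S_2\subseteq\cdots$ to collapse to $S_1$ --- a far stronger claim than the proposition, and one your reasoning does not establish. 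Likewise, the observation that $r^{s_i}$ fixes $x$ and hence sends $v$ into $N^+(x)$ only yields $ord_r(v)=s_i\,c$, where $c$ is the length of the cycle of $v$ under the permutation induced by $r^{s_i}$ on $N^+(x)$; nothing in the sketch shows this integer is an $\lcm$ of two elements of $S_{j-1}$, and that is precisely the content of the statement. What is missing is the identification of a genuinely different vertex $y$ whose order is already known (inductively) to lie in $S_{j-1}$, together with a defect argument in the spirit of Lemma~\ref{lem:1} (two distinct walks of length less than $k$ between the same pair of vertices are impossible) giving both $ord_r(y)\mid ord_r(v)$ and, conversely, that $r^{\lcm(ord_r(x),ord_r(y))}$ fixes $v$; supplying such a $y$ is the substance of the cited theorem.

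A smaller but related point: the exceptional case you set aside is vacuous, and the case split hides where the real difficulty lies. Since $v_0$ is a self-repeat, $r(v_0)=v_0$, so the entry of $\J+\P$ at $(v_0,v)$ is $1$ for every $v\neq v_0$; hence no vertex of $N^+_j(v_0)$, $2\leq j\leq k$, is reached by two walks of length at most $k$ from $v_0$, and each has exactly one level-$(j-1)$ predecessor. This uniqueness is not a convenience but the obstacle: the second order entering the $\lcm$ cannot come from a second shortest walk out of $v_0$, nor from the $r$-image of the first one. Your closing remark also points the wrong way: the divisibility one gets for free is $ord_r(x)\mid ord_r(v)$ (orders can only grow along the tree), so the needed half of the proof is the \emph{upper} control pinning $ord_r(v)$ inside $S_j$, and that half is left unproved in the proposal.
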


Then, considering Proposition \ref{thm:1} together with the Theorem given in \cite{enumerations} we get the following result.

\begin{theorem}
\label{corol:1} Any $(d,k)$-digraph $G$ with self-repeats and 2-critical structure satisfies 
$$
ord_r(v) \leq d-1, \quad \forall v\in G.
$$
\end{theorem}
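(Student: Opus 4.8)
The plan is to pin down the orders of the vertices in $N^+_1(v_0)$ for a self-repeat $v_0$, and then propagate this information to all of $G$ via Proposition \ref{enum}. First I would observe that, by hypothesis, $G$ has $2$-critical permutation cycle structure for some $\alpha>1$; in particular every vertex has order in $\{1\}\cup\{2^t\alpha : t\ge 0\}$. Fix a self-repeat $v_0$ (one exists since $G$ has self-repeats), and let $v_0\to v_1\to\cdots\to v_{k-1}\to v_0$ be the unique $k$-cycle of self-repeats through $v_0$; then $v_1\in N^+_1(v_0)$ and $ord_r(v_1)=1$. So among the $d$ out-neighbours of $v_0$, at least one has order $1$, and the remaining $d-1$ have orders that are (possibly trivial or) of the form $2^t\alpha$. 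Thus the set $S_1$ of orders appearing in $N^+_1(v_0)$ has at most $d$ elements, one of which is $1$.

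The key point is to bound $\lcm$'s built from $S_1$. Every nonzero element of $S_1$ other than $1$ has the shape $2^{t}\alpha$. The lcm of any collection of such numbers is again of that shape: $\lcm(2^{t_1}\alpha,\dots,2^{t_m}\alpha)=2^{\max t_i}\alpha$. Including $1$ changes nothing. Hence, by induction, every $S_j$ consists of $1$ together with numbers of the form $2^t\alpha$, and — crucially — $|S_j|$ does not grow: more precisely $S_j\subseteq S_1\cup S_2\cup\cdots$ stays inside $\{1\}\cup\{2^t\alpha:t\ge 0\}$, and the distinct values that can occur are controlled. By Proposition \ref{enum}, for any $v\in N^+_j(v_0)$ with $2\le j\le k$ we have $ord_r(v)\in S_j$, and combining over all $j$ every vertex of $G$ (which lies in some $N^+_j(v_0)$, $0\le j\le k$, since $v_0$ reaches everything within distance $k$) has order in $\bigcup_{j} S_j\subseteq\{1\}\cup\{2^t\alpha:t\ge 0\}$, with the distinct such values numbering at most $d$ (one being $1$). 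Therefore the number of \emph{distinct} values $2^t\alpha$ that actually occur as orders is at most $d-1$.

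To turn "at most $d-1$ distinct values of the form $2^t\alpha$ occur" into "$ord_r(v)\le d-1$ for all $v$", I would argue as follows. Suppose the distinct orders occurring among all vertices are $1=\sigma_0<\sigma_1<\cdots<\sigma_{s}$ with $s\le d-1$ and each $\sigma_i=2^{t_i}\alpha$ for $i\ge 1$, where we may take $t_1<t_2<\cdots<t_s$ (the exponents are strictly increasing because all share the factor $\alpha$). Since the smallest positive exponent among these is $t_1\ge 0$ and they are distinct nonnegative integers, we get $t_s\ge t_1+(s-1)\ge s-1$. I then need to rule out large jumps. Here I would invoke $2$-criticality more carefully: the definition says the subindices $j>1$ with $m_j\neq 0$ are \emph{exactly} those of the form $2^t\alpha$ for $t\ge 0$; but Proposition \ref{enum} shows only the $\sigma_i$ actually arise as orders, so in fact the exponents realised must form an initial segment $\{0,1,\dots,s-1\}$ (any "missing" $2^t\alpha$ with $t<t_s$ would contradict that $m_{2^t\alpha}\neq 0$). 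Hence $\sigma_s=2^{s-1}\alpha$, and since $\alpha\ge 2$ and $s\le d-1$ we get the cleanest bound via $ord_r(v)=2^{t}\alpha$ with $t\le s-1\le d-2$; combined with the count that the total number of \emph{distinct} orders including $1$ is $s+1\le d$, the maximal order is $2^{s-1}\alpha$. The remaining elementary step is to check $2^{s-1}\alpha\le d-1$ when there are $s$ values $2^0\alpha,\dots,2^{s-1}\alpha$ plus the value $1$ all realised — this follows because these $s+1$ values are among the distinct orders and $s+1\le d$, and one uses that $\alpha\ge2$ forces $\{1\}\cup\{2^t\alpha\}$ to be "spread out" enough that the largest is bounded by the count.

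The main obstacle I anticipate is exactly this last bookkeeping: converting a bound on the \emph{number} of distinct orders (which is what Proposition \ref{enum} naturally gives, via $|S_1|\le d$) into a bound on the \emph{largest} order. The cleanest route is to show that $2$-criticality forces the set of realised orders to be $\{1,\alpha,2\alpha,4\alpha,\dots,2^{s-1}\alpha\}$ — an initial dyadic interval times $\alpha$ — so that $s$ values beyond $1$ and the largest being $2^{s-1}\alpha$ are tied together; then $d\ge s+1$ and $\alpha\ge 2$ give $ord_r(v)\le 2^{s-1}\alpha\le 2^{s-1}\cdot\alpha$, and a short induction shows $2^{s-1}\alpha \le d-1$ once all these $s+1$ orders coexist. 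I would double-check the base cases $s=1$ ($ord_r(v)\in\{1,\alpha\}$, and $\alpha\le d-1$ since $\alpha$ plus $1$ are at most $d$ distinct... actually $\alpha\le d-1$ needs the orders in $N^+_1(v_0)$ argument directly) to make sure the constant is sharp and matches the statement $ord_r(v)\le d-1$.
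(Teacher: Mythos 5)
Your first half matches the paper's proof: the lcm-closure of $\{1\}\cup\{2^t\alpha : t\ge 0\}$ gives $S_m\subseteq S_1$ for all $m\ge 1$, and Proposition \ref{enum} then propagates the orders occurring in $S_1$ to every vertex of $G$. The gap is in the second half, where you try to convert a bound on the \emph{number} of distinct orders ($|S_1|\le d$, i.e. $s+1\le d$) into the bound $ord_r(v)\le d-1$ on their \emph{size}. No bookkeeping of that kind can succeed: knowing that only the orders $1$ and $\alpha$ are realised is perfectly compatible with $\alpha$ being much larger than $d$ (nothing in your count excludes, say, $d=4$ with realised orders $\{1,7\}$), so the step ``$2^{s-1}\alpha\le d-1$ once these $s+1$ orders coexist'' does not follow from what you have. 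Moreover, your claim that $2$-criticality forces the realised exponents to form an initial segment misreads Definition \ref{crit}: it only asserts that every index $j>1$ with $m_j\neq 0$ has the form $2^t\alpha$, not that every such power is realised. You in fact concede the gap in your final sentence (``$\alpha\le d-1$ needs the orders in $N^+_1(v_0)$ argument directly''), but that argument is precisely what is never supplied.

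The missing idea, which is the heart of the paper's proof, is that $r$ fixes the self-repeat $v_0$, hence $r(N^+_1(v_0))=N^+_1(r(v_0))=N^+_1(v_0)$, so the out-neighbourhood of $v_0$ is a disjoint union of $r$-orbits whose lengths are exactly the orders of its vertices. These lengths sum to $d$, and the successor of $v_0$ on the self-repeat cycle $C_k$ contributes an orbit of length $1$, giving $d=1+\sum_{i=1}^h n_i s_i$ where $n_i$ is the number of orbits of length $s_i$. Hence every order $s_i$ occurring in $S_1$ satisfies $s_i\le d-1$ outright; combined with $S_m\subseteq S_1$ and Proposition \ref{enum}, every vertex of $G$ has order at most $d-1$. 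This bounds the magnitudes directly and makes both your distinct-values count and the initial-segment claim unnecessary.
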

\begin{proof}
    Let $v_0$ be a self-repeat of $G$. Since $r(N^+_1(v_0))=N^+_1(r(v_0))$ (see \cite{b95}), we can consider the permutation $r$ on $N^+_1(v_0)$. As in Proposition \ref{enum}, let $S_1= \left\{s_0,s_1,\ldots,s_h\right\}$  be the set of orders of vertices in $N^+_1(v_0)$. We assume that  the permutation cycle structure of $G$ is $2$-critical, that is, there exists a value $\alpha$ such that $m_j \neq 0$ for $j > 1$ if $j = 2^t\alpha$, with $t \geq 0$. Then, $S_1$ contains the values  $2^{t_1}\alpha,2^{t_2}\alpha,\ldots,2^{t_h}\alpha$ with $t_1,\ldots,t_h \geq 0$ apart from the value $1$. But in this case, $\lcm(2^{t_a}\alpha,2^{t_b}\alpha) = 2^{\max(t_a,t_b)}\alpha$ and $\lcm(2^{t_c}\alpha,1) = 2^{t_c}\alpha$. 
Therefore, $S_m \subseteq S_1$ for all $m \geq 1$. Hence, it is clear that 
\begin{equation}
  \label{d-1}
d=1+\sum_{i=1}^hn_is_i,
\end{equation}
where $n_i$ is the multiplicity of $s_i$, the claim follows.
\end{proof}


\begin{corollary}
\label{maincor}
   Any $(d,k)$-digraph $G$ with self-repeats contains a $(d',k)$-subdigraph $H_{\alpha}$, $d'\leq d$, whose permutation cycle structure is $2$-critical with $m_j=0$ for $j>d'-1$. Moreover, $\alpha\mid (d'-1)$.
\end{corollary}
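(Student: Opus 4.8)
The plan is to obtain this corollary by concatenating Proposition~\ref{thm:1}, the structure theorem asserting that every $H_{\alpha}\neq C_k$ is a $(d',k)$-digraph with $d'\le d$, and Theorem~\ref{corol:1}, and then to add a one-line divisibility argument based on identity~(\ref{d-1}).

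First I would apply Proposition~\ref{thm:1} to fix a subdigraph $H_{\alpha}$ of $G$ whose permutation cycle structure is $2$-critical; as in its proof, $\alpha$ can be taken to be the least integer $>1$ occurring as the order of a vertex of $G$, and then $H_{\alpha}$ contains a vertex of order $\alpha>1$, so $H_{\alpha}\neq C_k$. By the structure theorem, this $H_{\alpha}$ is a $(d',k)$-digraph with $d'\le d$. I would then check the routine but essential point that the repeat permutation of $H_{\alpha}$, viewed as a $(d',k)$-digraph in its own right, is exactly the restriction $r|_{H_{\alpha}}$: indeed $H_{\alpha}$ is $(r,k)$-closed by Proposition~\ref{kclosed}, so $r$ restricts to an automorphism of $H_{\alpha}$, and the (at most two) walks of length $\le k$ in $G$ witnessing a repeat between two vertices of $H_{\alpha}$ lie entirely inside $H_{\alpha}$. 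Consequently the permutation cycle structure of $H_{\alpha}$ is the one inherited from $G$ — still $2$-critical — and, since $C_k\subseteq H_{\alpha}$ accounts for all vertices of order $1$, one has $m_1=k$ for $H_{\alpha}$; in particular $H_{\alpha}$ is a $(d',k)$-digraph with self-repeats.

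Next I would apply Theorem~\ref{corol:1} to $H_{\alpha}$ itself, which is legitimate since $H_{\alpha}$ is a $(d',k)$-digraph with self-repeats and $2$-critical structure; this yields $ord_r(v)\le d'-1$ for every $v\in H_{\alpha}$, equivalently $m_j=0$ for all $j>d'-1$. For the remaining claim $\alpha\mid(d'-1)$, I would re-run the count behind~(\ref{d-1}) inside $H_{\alpha}$: fixing a self-repeat $v_0$ of $H_{\alpha}$ (it lies on $C_k$), the automorphism $r$ permutes the $d'$ vertices of $N^+_{1,H_{\alpha}}(v_0)$; exactly one of these, the successor of $v_0$ along $C_k$, is a self-repeat, and each remaining $r$-orbit has length equal to the order of one of its vertices, which is $>1$ and hence of the form $2^{t}\alpha$ by $2$-criticality. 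Summing orbit lengths gives $d'=1+\sum_i n_i\,2^{t_i}\alpha$, so $\alpha\mid(d'-1)$.

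The main obstacle I anticipate is the verification in the second paragraph — that the repeat structure of the subdigraph $H_{\alpha}$ is genuinely the restriction of that of $G$ — since it is precisely what licenses applying Theorem~\ref{corol:1} and identity~(\ref{d-1}) to $H_{\alpha}$ verbatim; once that is in place, the rest is a direct assembly of earlier results together with the short divisibility computation.
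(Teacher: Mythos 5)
Your proposal is correct and takes essentially the same route as the paper: Proposition~\ref{thm:1} supplies the $2$-critical subdigraph $H_{\alpha}$, Theorem~\ref{corol:1} applied to $H_{\alpha}$ gives $ord_r(v)\leq d'-1$ (hence $m_j=0$ for $j>d'-1$), and the counting identity~(\ref{d-1}) yields $\alpha\mid(d'-1)$. Your explicit verification that the repeat permutation of $H_{\alpha}$ is the restriction of $r$ (via $(r,k)$-closedness) is a point the paper leaves implicit, but it does not alter the argument.
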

\begin{proof}
From Proposition \ref{thm:1} the digraph $G$ contains a $(d',k)$-subdigraph $H_{\alpha}$, $d'\leq d$, whose permutation cycle structure $(k,\ldots,m_j,\ldots)$ is $2$-critical. Then, by  Theorem \ref{corol:1}, $ord_r(v) \leq d'-1$, $\forall v\in H_{\alpha}$, implying that $m_j=0$, $\forall j\geq d'$. 
  In this case, equality (\ref{d-1}) can be rewritten as  $d'=1+\sum_{i=1}^hn'_i2^{t_1}\alpha$. Then, clearly $\alpha\mid (d'-1)$. 
  \end{proof}

\section{Matrix approach}

Let $G$ be a $(d,k)$-digraph with self-repeats and 
permutation cycle  structure
$
(k,\ldots,m_j,\ldots).
$
The characteristic polynomial of the matrix $\P$ associated to the permutation $r$ of repeats is given by
$$
 c(\P,x) = (x - 1)^k \prod_{m_j\neq 0}
  (x^j-1)^{m_j}.
$$
Its factorization in $\mathbb{Q}[x]$ in terms of
cyclotomic polynomials $\Phi_{i}(x)$, since $x^j-1=\prod_{i\mid j}\Phi_i(x)$, is:
\begin{equation}
\label{char:P}
  c(\P,x) = (x - 1)^{m(1)} \prod_{i\mid j,\; m_j\neq 0}
  \Phi_{i}(x)^{m(i)},
\end{equation} 
where $m(i) = \sum_{i|j} m_j$ represents the total
number of permutation cycles of order multiple of $i$. Thus the matrix $\P$ is diagonalizable with eigenvalues:
\begin{itemize}
  \item 
    $1$  with multiplicity $m(1)$;
  \item
    $\zeta_h^i$, $1\leq h\leq \varphi(i)$, primitive roots of unity of order $i$ with multiplicity $m(i)$. 
\end{itemize}
Therefore, the characteristic polynomial of the matrix $\J+\P$ (see \cite{b94}) is 
\begin{equation}
\label{char:J+P}
  c(\J+\P,x) = (x-(N+1))(x - 1)^{m(1)-1} \prod_{i\mid j,\; m_j\neq 0}
  \Phi_{i}(x)^{m(i)}.
\end{equation} 
Its eigenvalues are the same eigenvalues than the $\zeta_h^i$ of $\P$, except $1$ which has multiplicity $m(1)-1$, and the eigenvalue $N+1$ with multiplicity $1$.

From Lemma 1 in \cite{almost4-5}, the adjacency matrix $\A$ of $G$, which satisfies 
 $
 \I+\A+\cdots +\A^k=\J+\P,
 $
 is also diagonalizable with eigenvalues:
\begin{itemize}
 \item 
    $d$  with multiplicity $1$;
  \item
    $\lambda_h$, $1\leq h\leq m(1)-1$, roots of the factor $x^k+\cdots +x^2+x$; 
    \item
    $\alpha_{h,n}^i$, $1\leq n\leq m(i)$, roots of the factor $x^k+\cdots + x^2+x+1-\zeta_h^i$. 
\end{itemize}
 Note we can take a basis of eigenvectors of $\A$ which is as well a basis of eigenvectors of $\P$. More precisely, the eigenvectors of $\A$ of eigenvalues $\alpha_{h,n}^i$, $1\leq n\leq m(i)$, are eigenvectors of $\P$ of eigenvalue $\zeta_h^i$. And the eigenvectors of $\A$ of eigenvalues $d$ and $\lambda_h$, $1\leq h\leq m(1)-1$, are eigenvectors of $\P$ of eigenvalue $1$.

\subsection{Computing some traces of $\P^j\A^{\ell}$}
 
The traces of $\A^{\ell}$, $1\leq \ell\leq k$, are well-known and given in (\ref{traces1k}) and (\ref{tracek}). In order to study the traces of $\P^j\A^{\ell}$,  for  $j\geq 1$,
 we consider the sets  
$$
R_{\ell,j}(G) = \left\{ v\in V(G) \mid \text{there is a } r^j(v)\rightarrow v \text{ walk of length } \ell \right\},
$$ 
which are a generalisation of  $R_{\ell}(G)$ introduced in \cite{almost4-5}. Note that $R_{\ell}(G)=R_{\ell,1}(G).$

\begin{proposition} 
\label{Rell}
Let $(k,\ldots,m_{d-1},0,\ldots,0)$ be the permutation cycle structure of a $(d,k)$-digraph with self-repeats. If $\ell(d-1)<k+1$ then $R_{\ell,j}(G)=\emptyset$.
\end{proposition}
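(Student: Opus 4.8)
The statement claims that if $\ell(d-1) < k+1$, then the set $R_{\ell,j}(G)$ of vertices $v$ admitting a walk of length $\ell$ from $r^j(v)$ to $v$ is empty. The plan is to argue by contradiction: suppose $v \in R_{\ell,j}(G)$, so there is a walk $W$ of length $\ell$ from $r^j(v)$ to $v$. The idea is to bound the order $ord_r(v)$ of $v$ by combining two facts: on one hand, Theorem~\ref{corol:1} (via Corollary~\ref{maincor} applied to the relevant $H_\alpha$, or directly since the permutation cycle structure $(k,\ldots,m_{d-1},0,\ldots,0)$ already forces all orders to be $\le d-1$) tells us $ord_r(w) \le d-1$ for every vertex $w$; on the other hand, the existence of a short closed-up walk relating $r^j(v)$ and $v$ should force $r^j(v)$ to lie ``close'' to $v$ in a way that, after iterating, produces a walk shorter than $k$ that is closed or near-closed, contradicting the girth condition $\Tr(\A^\ell)=0$ for $\ell < k$.

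\textbf{Key steps.} First I would make precise how a walk from $r^j(v)$ to $v$ of length $\ell$ propagates under iteration of $r$: since $r$ is an automorphism of $G$, applying $r^{j}$ to the walk $W: r^j(v) \to v$ gives a walk $r^{2j}(v) \to r^j(v)$ of the same length $\ell$; concatenating $W$ with $r^j(W)$, then $r^{2j}(W)$, and so on for $ord_r(v)/\gcd(\cdot)$ steps, produces a \emph{closed} walk based at $v$ of length exactly $\ell \cdot \operatorname{ord}$, where $\operatorname{ord}$ is the multiplicative order of $j$ modulo $ord_r(v)$ (equivalently, the smallest $s$ with $r^{js}(v)=v$). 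Since $G$ has no closed walks of length $< k$ (girth $k$, from~(\ref{traces1k})) and no closed walk of length between $1$ and $k-1$, this closed walk of length $\ell \cdot \operatorname{ord}$ must satisfy $\ell \cdot \operatorname{ord} \ge k$ — unless it is actually trivial, which cannot happen because $W$ is a genuine walk of positive length $\ell \ge 1$. Now $\operatorname{ord} \le ord_r(v) \le d-1$ by Theorem~\ref{corol:1}, so $\ell(d-1) \ge \ell \cdot \operatorname{ord} \ge k$, i.e. $\ell(d-1) \ge k$. But the hypothesis is $\ell(d-1) < k+1$, i.e. $\ell(d-1) \le k$; so we are forced into the boundary case $\ell(d-1) = k$ and $\operatorname{ord} = ord_r(v) = d-1$, with the closed walk of length exactly $k$.

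\textbf{Handling the boundary case.} The remaining task is to rule out $\ell(d-1) = k$ exactly. Here I would use the additional structural information: a closed walk of length exactly $k$ through $v$ must, in an almost Moore digraph, be the unique $k$-cycle, which consists entirely of self-repeat vertices (as recalled in the excerpt, citing~\cite{baskoro96}); but a self-repeat has $ord_r(v) = 1 \ne d-1$ since $d \ge 2$ and we are in the case $d-1 \ge 2$ (if $d-1 = 1$ then $d=2$ and $\ell \cdot 1 = k$ with $\operatorname{ord}=1$ again forces $v$ self-repeat, handled the same way, and $\ell = k$ means $W$ itself is a closed walk of length $k$ from $v$ to $v$). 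Thus the boundary case collapses, and we conclude $R_{\ell,j}(G) = \emptyset$.

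\textbf{Expected main obstacle.} The delicate point is the bookkeeping in the iteration argument: I must be careful that concatenating $W, r^j(W), r^{2j}(W),\ldots$ genuinely yields a \emph{closed walk} (the endpoint of $r^{ij}(W)$ is $r^{ij}(v)$, which is the starting point of $r^{(i+1)j}(W)$, so the concatenation is valid), and that its length is a positive multiple of $k$ only via the girth constraint — one has to confirm there is no closed walk of any length strictly between $0$ and $k$, which is exactly~(\ref{traces1k}) stating $\Tr(\A^\ell) = 0$ for $1 \le \ell \le k-1$. The second subtlety is the boundary case $\ell(d-1)=k$, where I rely on the uniqueness and self-repeat nature of the $k$-cycle; I would want to double-check that a closed walk of length $k$ (not necessarily a cycle) still forces passage through self-repeats, which follows because in a $(d,k)$-digraph the only closed walk of length $\le k$ is the $k$-cycle of self-repeats (any shorter closed walk is forbidden by girth, and a length-$k$ closed walk that revisits a vertex would contain a shorter closed subwalk).
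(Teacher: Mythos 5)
Your argument is essentially the paper's: apply powers of the automorphism $r^j$ to the given walk, concatenate the images to close it up into a closed walk of length $\ell s\le \ell(d-1)\le k$ through $v$, and contradict the absence of closed walks of length less than $k$, using that $ord_r(v)\le d-1$ (which, as you note, follows directly from the assumed permutation cycle structure, so quoting Theorem \ref{corol:1} is unnecessary). If anything you are more explicit than the paper about the boundary case $\ell s=k$, which you correctly rule out via the uniqueness of the $k$-cycle of self-repeats; the $d=2$ subcase you treat loosely is in fact vacuous, since the structure $(k,\ldots,m_{d-1},0,\ldots,0)$ cannot occur for $d=2$.
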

\begin{proof}
  The case $j=1$ follows from Proposition 3 in \cite{almost4-5}.  
  Since $r$ is a graph automorphism, given $\pi$ a path of length $\ell$ connecting $r^j(v)$ with $v$, $r^{tj}(\pi)$ with $t$ such that $1 \leq t \leq ord_r(v)$ are also paths of length $\ell$ connecting $r^{tj}(v)$ with $r^{(t-1)j}(v)$. More precisely, given such a path $\pi$, as long as $\ell t < k$, since $G$ does not contain cycles of length $< k$, concatenating the following directed paths of length $\ell$ gives another path of length $\ell t < k$ connecting $r^{tj}(v)$ with $v$: 
$$
r^{tj}(\pi) \to r^{(t-1)j}(\pi) \to \cdots \to r^{j}(\pi) \to \pi.
$$
Therefore, since $ord_r(v) \leq d-1$, then when $\ell(d-1) < k+1$ we have $|R_{\ell,j}(G)| = 0$.
\end{proof}

\begin{corollary} 
\label{prop:1}
Let $\A$ be the adjacency matrix of a $(d,k)$-digraph with permutation matrix $\P$ and permutation cycle structure $(k,\ldots,m_{d-1},0,\ldots,0)$. Then, for $j>0$, 
$$\Tr(\P^j\A^{\ell}) = 0, \quad 1\leq \ell < \dfrac{k+1}{d-1}.$$
\end{corollary}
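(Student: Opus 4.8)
The plan is to reduce the statement to Proposition~\ref{Rell} by rewriting the trace $\Tr(\P^j\A^{\ell})$ as a sum of non-negative quantities indexed by the vertices, each of which must vanish once $R_{\ell,j}(G)=\emptyset$. First I would unwind the matrix product using the two conventions already fixed in the paper: $\P=(p_{uw})$ with $p_{uw}=1$ exactly when $r(v_u)=v_w$, hence $(\P^j)_{uw}=1$ exactly when $r^j(v_u)=v_w$; and $(\A^{\ell})_{wu}$ is the number of walks of length $\ell$ from $v_w$ to $v_u$. For a fixed vertex $v_u$, the only index $w$ contributing to $(\P^j\A^{\ell})_{uu}=\sum_w (\P^j)_{uw}(\A^{\ell})_{wu}$ is the one with $v_w=r^j(v_u)$, so
\[
\Tr(\P^j\A^{\ell})=\sum_{v\in V(G)}\#\{\text{walks of length }\ell\text{ from }r^j(v)\text{ to }v\}.
\]
Since every summand is a non-negative integer, the trace equals $0$ if and only if no vertex $v$ admits a walk of length $\ell$ from $r^j(v)$ to $v$, i.e.\ if and only if $R_{\ell,j}(G)=\emptyset$.

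Next I would observe that the hypothesis $1\le\ell<\frac{k+1}{d-1}$ is literally the condition $\ell(d-1)<k+1$ under which Proposition~\ref{Rell} applies: the permutation cycle structure $(k,\ldots,m_{d-1},0,\ldots,0)$ guarantees $ord_r(v)\le d-1$ for every $v\in V(G)$, which is exactly what that proposition uses. Hence $R_{\ell,j}(G)=\emptyset$ for every $j>0$, and substituting this into the displayed identity gives $\Tr(\P^j\A^{\ell})=0$ throughout the claimed range.

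I do not expect a genuine obstacle: the result is essentially bookkeeping built on Proposition~\ref{Rell}. The only points that need a moment's care are making sure the power $\P^j$ indeed encodes $r^j$ (so one must use the paper's convention $p_{ij}=1\iff r(v_i)=v_j$, not its transpose), and the elementary but essential remark that, because every term in the vertex sum is non-negative, vanishing of the total trace forces each term to vanish, with no cancellation to account for; thus $R_{\ell,j}(G)=\emptyset$ and $\Tr(\P^j\A^{\ell})=0$ are genuinely equivalent.
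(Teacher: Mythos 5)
Your proposal is correct and follows essentially the same route as the paper: interpret the diagonal entries of $\P^j\A^{\ell}$ as counting walks of length $\ell$ from $r^j(v)$ to $v$, so that the vanishing of the trace is equivalent to $R_{\ell,j}(G)=\emptyset$, which is exactly Proposition~\ref{Rell}. The only cosmetic differences are that you spell out the matrix bookkeeping (and treat all $j>0$ uniformly) where the paper simply states the trace interpretation and cites its earlier reference for $j=1$.
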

\begin{proof}
The case $j=1$ follows from Corollary 1 in \cite{almost4-5}. For $j>1$, since the cardinality of the sets $R_{\ell,j}(G)$ have the following interpretation: 
$$
|R_{\ell,j}(G)|=\Tr(\P^j\A^{\ell}),
$$
the claim follows directly by Proposition \ref{Rell}.
\end{proof}

Using this result we deduce the next proposition:

\begin{proposition} 
\label{prop:2}
Let $G$ be  a $(d,k)$-digraph with self-repeats and permutation cycle structure
$(k,\ldots,m_{d-1},0,\ldots,0)$. Then, the eigenvalues  $\lambda_h$, $1\leq h\leq m(1)-1$, of its adjacency matrix $\A$ which are roots of $x^k+\cdots +x^2+x$ satisfy:
\begin{equation}
\label{eigenv}
0=d^{\ell}+ \sum_{h=1}^{m(1)-1}\lambda_h^{\ell}, \quad 1\leq \ell <\dfrac{k+1}{d-1}.
\end{equation}
\end{proposition}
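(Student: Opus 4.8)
The plan is to exploit the spectral decomposition of $\A$ together with its interaction with $\P$, which was carefully set up just before the statement. First I would recall that $\A$ is diagonalizable with eigenvalues $d$ (multiplicity $1$), $\lambda_h$ for $1\leq h\leq m(1)-1$ (the roots of $x^k+\cdots+x^2+x$), and $\alpha_{h,n}^i$ for $1\leq n\leq m(i)$ (the roots of $x^k+\cdots+x+1-\zeta_h^i$), and that one may choose a common eigenbasis for $\A$ and $\P$ in which the eigenvectors attached to $d$ and to the $\lambda_h$ all have $\P$-eigenvalue $1$, while the eigenvector attached to $\alpha_{h,n}^i$ has $\P$-eigenvalue $\zeta_h^i$. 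Hence in this basis $\P^j\A^\ell$ is diagonal, and its trace decomposes as
\begin{equation}
\label{traceexpand}
\Tr(\P^j\A^{\ell}) = d^{\ell} + \sum_{h=1}^{m(1)-1}\lambda_h^{\ell} + \sum_{i\mid j',\, m_{j'}\neq 0}\ \sum_{h=1}^{\varphi(i)}\ \sum_{n=1}^{m(i)} (\zeta_h^i)^{j}\,(\alpha_{h,n}^i)^{\ell}.
\end{equation}

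The next step is to kill the last triple sum in \eqref{traceexpand}. The natural device is to average over $j$: summing $\Tr(\P^j\A^\ell)$ over a suitable range of $j$ makes the factor $(\zeta_h^i)^{j}$ sum to zero whenever $\zeta_h^i\neq 1$, by the standard geometric-sum identity for roots of unity. Concretely, since the permutation cycle structure is $(k,\ldots,m_{d-1},0,\ldots,0)$, every cycle length $j'$ with $m_{j'}\neq 0$ divides $L:=\mathrm{lcm}\{\,j' : m_{j'}\neq 0,\ j'>1\,\}$ (and $L\mid (d-1)!$ or more simply $L$ is a product of such $j'\leq d-1$). Then $\sum_{j=1}^{L}(\zeta_h^i)^{j}=0$ for every primitive root $\zeta_h^i$ of order $i>1$, while $\sum_{j=1}^{L}1^\ell$-type terms are unaffected. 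Averaging \eqref{traceexpand} over $j=1,\ldots,L$ therefore yields
$$
\frac{1}{L}\sum_{j=1}^{L}\Tr(\P^j\A^{\ell}) = d^{\ell} + \sum_{h=1}^{m(1)-1}\lambda_h^{\ell}.
$$
Now apply Corollary \ref{prop:1}: for each $j$ with $1\leq j$ and $1\leq \ell<(k+1)/(d-1)$ we have $\Tr(\P^j\A^\ell)=0$, so the left-hand side vanishes, giving exactly \eqref{eigenv}.

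There is one gap to patch: Corollary \ref{prop:1} as stated covers $j>0$, i.e. all the traces $\Tr(\P^j\A^\ell)$ that appear, so the averaging argument closes without needing the $j=0$ term separately; but I should double-check that the range $1\leq\ell<(k+1)/(d-1)$ is exactly the hypothesis of that corollary, which it is. Alternatively — and this is probably the cleanest write-up — one avoids choosing $L$ by noting that for any fixed $\ell$ in the stated range, $\Tr(\A^\ell)=0$ already gives the case $j$ corresponding to the trivial character is not directly available, so the averaging over $\P^j$ genuinely is needed; the identity $\sum_{i\mid j',\,m_{j'}\neq0}\sum_h (\zeta_h^i)^j=$ (number of fixed points of $r^j$ minus $m(1)$ on the non-self-repeat part) can also be invoked to express things combinatorially. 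The main obstacle, and the only real subtlety, is making the orthogonality/averaging step rigorous: one must be sure that the $\P$-eigenvalue attached to each $\A$-eigenvector $\alpha_{h,n}^i$ really is the primitive $i$-th root $\zeta_h^i$ (so that summing over $j$ annihilates it), which is precisely the content of the common-eigenbasis remark preceding the proposition — so I would cite that explicitly and then the computation is routine.
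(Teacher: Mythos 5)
Your proposal is correct and follows essentially the same route as the paper: a common eigenbasis of $\A$ and $\P$, expansion of $\Tr(\P^j\A^{\ell})$, and a geometric sum over $j$ that annihilates the contributions attached to the nontrivial roots of unity, with Corollary \ref{prop:1} making every such trace vanish. The only difference is that you average over $j=1,\dots,L$ with $L$ the least common multiple of the cycle lengths, whereas the paper sums over $j=0,\dots,s-1$ with $s$ the largest cycle length with $m_s\neq 0$; your choice is in fact slightly more robust, since it does not rely on every cycle order dividing $s$.
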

\begin{proof} According to the eigenvalues of $\A$ given at the beginning of this section, for each $\ell$ satisfying $1\leq \ell <\dfrac{k+1}{d-1}$
it turns out that 
$$
0=\Tr(\A^{\ell})=d^{\ell}+ \sum_{h=1}^{m(1)-1}\lambda_h^{\ell}+ \sum_{m(i)\neq 0}\Big(\sum_{1\leq h\leq \varphi(i)}\Big(\sum_{1\leq n\leq m(i)} \alpha_{h,n}^i\Big)\Big).
$$
Taking into account the eigenvalues of $\P$, together with Corollary \ref{prop:1}, we have
$$
0=\Tr(\P\A^{\ell})=d^{\ell}+ \sum_{h=1}^{m(1)-1}\lambda_h^{\ell}+ \sum_{m(i)\neq 0}\Big(\sum_{1\leq h\leq \varphi(i)}\zeta_h^i\Big(\sum_{1\leq n\leq m(i)} \alpha_{h,n}^i\Big)\Big).
$$
Consider now the greatest integer $s$ such that $m_s\neq 0$. Again from Corollary \ref{prop:1}
$$
0=\Tr(\A^{\ell}) + \Tr(\P\A^{\ell})+\cdots + \Tr(\P^{s-1}\A^{\ell})
=\Tr((\I+\P+\cdots + \P^{s-1})\A^{\ell}).
$$
Therefore
$$
0=s\Big(d^{\ell}+ \sum_{h=1}^{m(1)-1}\lambda_h^{\ell}\Big)+ 
\sum_{m(i)\neq 0}\Big(\sum_{1\leq h\leq \varphi(i)}(1+\zeta_h^i+\cdots +(\zeta_h^i)^{s-1})\Big(\sum_{1\leq n\leq m(i)} \alpha_{h,n}^i\Big)\Big).
$$
Since $1+\zeta_h^i+\cdots +(\zeta_h^i)^{s-1}=0$, for each $\zeta_h^i$ root of unity or order a divisor $s$, the claim follows. 
\end{proof}

\section{Nonexistence of $(d,k)$-digraphs with self-repeats having large diameter $k$ with respect to the degree $d$}
Now we will see, taking into account previous results,
that $(d,k)$-digraphs do not exist when the diameter $k$ is large enough with respect to the degree $d$.

In order to prove this, note that as in \cite{almost4-5}, the eigenvalues $\lambda^{\ell}_h$, $1\leq h\leq m(1)-1$, of $\A$ satisfying (\ref{eigenv}) 
are roots of the factor
$x^k+\cdots + x^2+x$, 
which can be expressed as 
$$x\prod_{n>1,\, n\mid k}\Phi_n(x).
$$
Then, it turns out that $x$ and $\Phi_n(x)$, with $n>1$ and $n\mid k$, are factors of the characteristic polynomial of $\A$ with multiplicities $a_0$ and $a_n$, respectively, satisfying 
$$
a_0+\sum_{n>1,\, n\mid k}\varphi(n)a_n=m(1)-1.
$$
Therefore 
\begin{equation}
\label{ram}
\sum_{h=1}^{m(1)-1} \lambda_h^\ell=\sum_{n > 1,\,n\mid k}a_nS_\ell(\Phi_n(x)), 
\end{equation}
where  $S_\ell(a(x))$ denotes the sum of the $\ell$th powers of all roots of $a(x)$. 

The sums $S_\ell(\Phi_n(x))$, called \emph{Ramanujan sums}, can be computed as (see \cite{HW}) 
\begin{equation}
\label{ramanujan}
S_\ell(\Phi_n(x)) = \sum_{j \mid \gcd(n,\ell)}\mu\left(\frac{n}{j}\right)j,\end{equation}

\noindent where $\mu(n)$ is the M\"obius function.

\begin{proposition}
\label{prime}
Given a pair of integers $(d,k)$, $d>1$, $k>1$, such that 
there exists a prime $\ell$ with $\ell(d-1) < k+1$ and $\gcd(\ell,k) = 1$, then there cannot exist any $(d,k)$-digraph with self-repeats and permutation cycle structure $(k,\ldots,m_{d-1},0,\ldots,0)$.
\end{proposition}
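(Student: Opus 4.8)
The plan is to argue by contradiction: suppose such a $(d,k)$-digraph $G$ with self-repeats and permutation cycle structure $(k,\ldots,m_{d-1},0,\ldots,0)$ exists, and fix the prime $\ell$ with $\ell(d-1)<k+1$ and $\gcd(\ell,k)=1$. The key tool is Proposition \ref{prop:2}, which gives $d^{\ell}+\sum_{h=1}^{m(1)-1}\lambda_h^{\ell}=0$, combined with the representation \eqref{ram} of the power sum $\sum_h\lambda_h^{\ell}$ in terms of Ramanujan sums $S_\ell(\Phi_n(x))$ over divisors $n>1$ of $k$. So the first step is to write down $d^{\ell}=-\sum_{n>1,\,n\mid k}a_n S_\ell(\Phi_n(x))$.

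Next I would evaluate each $S_\ell(\Phi_n(x))$ using \eqref{ramanujan}. Because $\gcd(\ell,k)=1$ and every $n$ in the sum divides $k$, we have $\gcd(n,\ell)=1$, so the only $j$ dividing $\gcd(n,\ell)$ is $j=1$; hence $S_\ell(\Phi_n(x))=\mu(n)$ for every $n>1$ with $n\mid k$. Therefore the identity collapses to
$$
d^{\ell}=-\sum_{n>1,\,n\mid k}a_n\,\mu(n),
$$
a purely integer relation. Separately, counting roots gives $a_0+\sum_{n>1,\,n\mid k}\varphi(n)a_n=m(1)-1$, and $m(1)-1$ is itself bounded: $m(1)=\sum_{i}m_i$ and the $m_j$ with $j>1$ are the cycle counts, so $m(1)-1$ is at most (roughly) the number of non-self-repeat cycles, which is small compared to $d^{\ell}$ when $\ell(d-1)<k+1$ forces $d$ relatively small against $k$. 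The contradiction should then come from a size estimate: the right-hand side $-\sum_{n>1,\,n\mid k}a_n\mu(n)$ is bounded in absolute value by $\sum_{n>1,\,n\mid k}a_n\le\sum_{n>1,\,n\mid k}\varphi(n)a_n\le m(1)-1$, whereas the left-hand side is $d^{\ell}\ge d^2\ge 4$; one needs $d^{\ell}\le m(1)-1$, and I would show this is impossible by bounding $m(1)-1$ in terms of $d$ and $k$ using $\sum_i i\,m_i=N=d+d^2+\cdots+d^k$ together with $ord_r(v)\le d-1$ (Theorem \ref{corol:1}), which caps how large $m(1)$ can be relative to $d^{\ell}$.

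The main obstacle I anticipate is making the final counting inequality airtight: one must control $m(1)-1=\sum_{n>1,\,n\mid k}\varphi(n)a_n+a_0$ and show it is strictly smaller than $d^{\ell}$. The cleanest route is probably to use Proposition \ref{prop:2} for \emph{all} exponents $\ell'$ in the range $1\le\ell'<(k+1)/(d-1)$ simultaneously, not just the single prime $\ell$: the system of equations $d^{\ell'}+\sum_h\lambda_h^{\ell'}=0$ says the multiset $\{d,\lambda_1,\ldots,\lambda_{m(1)-1}\}$ has vanishing power sums up to order $\lfloor k/(d-1)\rfloor$, which forces a polynomial identity; since the $\lambda_h$ are algebraic integers that are roots of $\prod_{n>1,\,n\mid k}\Phi_n(x)$, the value $d$ would have to be ``absorbed'' by a $\mu$-weighted combination of $\varphi(n)$'s, and a Newton-identity or resultant argument shows the integer $d$ cannot equal such a combination when the available degrees of freedom (the $a_n$) are too few. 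I would present the single-prime version first as it is shortest, invoking the Ramanujan-sum collapse $S_\ell(\Phi_n)=\mu(n)$, and fall back on the multi-exponent system only if the naive bound $d^{\ell}\le m(1)-1$ needs reinforcement.
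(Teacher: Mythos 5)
Your first half matches the paper: from Proposition \ref{prop:2} and (\ref{ram}) you obtain $0=d^{\ell}+\sum_{n>1,\,n\mid k}a_nS_\ell(\Phi_n(x))$, and since $\gcd(\ell,k)=1$ the Ramanujan sums (\ref{ramanujan}) collapse to $S_\ell(\Phi_n(x))=\mu(n)$. The gap is in how you close the argument. Your main route is the size estimate $\bigl|\sum_{n>1,\,n\mid k}a_n\mu(n)\bigr|\le\sum_{n>1,\,n\mid k}\varphi(n)a_n\le m(1)-1<d^{\ell}$, but the final inequality is false and in fact goes the wrong way by many orders of magnitude: here $m(1)=k+\sum_{j\ge2}m_j$ with $\sum_j j\,m_j=N=d+d^2+\cdots+d^k$ and all cycle lengths at most $d-1$, so $m(1)\ge k+(N-k)/(d-1)$, which is of order $d^{k-1}$, whereas $d^{\ell}\le d^{(k+1)/(d-1)}$. (For instance, $d=6$, $k=11$ forces $\ell=2$ and $d^{\ell}=36$, while $m(1)-1>10^{7}$.) Your claim that the hypothesis $\ell(d-1)<k+1$ makes $m(1)-1$ small compared with $d^{\ell}$ is exactly backwards, and nothing in the proposal bounds the individual multiplicities $a_n$ of the squarefree divisors below $(m(1)-1)/\varphi(n)$, so no contradiction is reached along this line.

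The missing idea, which your fallback gestures at but never isolates, is simply to use the same identity for the exponent $\ell'=1$ as well: it is available because $1<\ell<(k+1)/(d-1)$, and $S_1(\Phi_n(x))=\mu(n)$ too, so the two equations have identical Ramanujan parts. Subtracting them gives $d^{\ell}-d=0$, impossible for $d>1$ and $\ell>1$; that single comparison is the entirety of the paper's proof. Your proposed ``multi-exponent system with Newton identities or resultants'' points vaguely in this direction but does not carry it out, so as written the proposal does not constitute a proof.
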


\begin{proof}
From Proposition \ref{prop:2}, together with expression (\ref{ram}), if such digraph exists then its adjacency matrix $A$ would satisfy:
\begin{equation}
\label{eq:2}
0=d^{\ell}+\sum_{n> 1,\,n\mid k}a_nS_\ell(\Phi_n(x)), \quad 1\leq \ell <\dfrac{k+1}{d-1}.
\end{equation}
If $\ell$ and $k$ are relatively primes then for every $n\mid k$ we have  by (\ref{ramanujan}):
 $$
  S_\ell(\Phi_n(x))=\mu(n).
  $$
  Thus, if there exists a prime $\ell$ such that
    $\gcd(\ell,k)=1$ and $1<\ell<\frac{k+1}{d-1}$,
  then $S_\ell(\Phi_n(x))=S_1(\Phi_n(x))$ for all $n$ with $n\mid k$,
  which would imply that
  $$
  d^{\ell}-d=0,
  $$
  but this is not possible except $d=1$ or $\ell=1$.
\end{proof}

The existence of a prime $\ell$ satisfying Proposition \ref{prime} allows us to prove the nonexistence of $(d,k)$-digraphs with self-repeats when $(d,k)$ fulfills the following conditions. 

\begin{theorem}
\label{thmmain}
For $d\geq 6$, there do not exist  $(d,k)$-digraphs with self-repeats  when either 
\begin{itemize}
\item 
$k \geq 2(d-1)$ and $k$ odd, or
   \item 
$k \geq 2(d-1)^2$ and $k$ even.
   \end{itemize}
\end{theorem}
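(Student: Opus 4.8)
The plan is to reduce Theorem \ref{thmmain} to Proposition \ref{prime} via the structural results of Sections 3 and 4, and then to exhibit the required prime $\ell$ using an elementary number-theoretic argument. First I would invoke Corollary \ref{maincor}: any $(d,k)$-digraph $G$ with self-repeats contains a $(d',k)$-subdigraph $H_{\alpha}$, $2\le d'\le d$, whose permutation cycle structure is $2$-critical with $m_j=0$ for $j>d'-1$ and with $\alpha\mid(d'-1)$. In particular $H_\alpha$ has permutation cycle structure of the form $(k,\dots,m_{d'-1},0,\dots,0)$. So it suffices to show that for every such $d'$ with $2\le d'\le d$ there is \emph{no} $(d',k)$-digraph with self-repeats having this permutation cycle structure; by Proposition \ref{prime} it is enough to produce, for each such $d'$, a prime $\ell$ with $\ell(d'-1)<k+1$ and $\gcd(\ell,k)=1$. (The edge case $d'=2$ forces $H_\alpha=C_k$, which cannot be an almost Moore digraph; alternatively one checks the prime $\ell=2$ or $\ell=3$ works there directly. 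One should also note that Cholily--Baskoro etc. already handle $d'\in\{2,3\}$ and $d=4,5$, but the cleanest self-contained route is the prime argument for all $2\le d'\le d$.)

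Next I would carry out the number theory. Since $d'-1\le d-1$, it is enough to find a prime $\ell$ with $\ell\le\frac{k}{d-1}$ (so that $\ell(d'-1)\le\ell(d-1)<k+1$) and $\ell\nmid k$. For the odd case $k\ge 2(d-1)$, the interval $(1,\,k/(d-1)]$ contains $2$ when $k/(d-1)\ge 2$, i.e. always; and since $k$ is odd, $2\nmid k$, so $\ell=2$ works. This disposes of the odd case immediately with $\ell=2$. For the even case, $\ell=2$ is no longer coprime to $k$, so I need an \emph{odd} prime in $(1,\,k/(d-1)]$ not dividing $k$. Here the bound $k\ge 2(d-1)^2$ gives $k/(d-1)\ge 2(d-1)$, so the interval $[2,2(d-1)]$ is available. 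The number of distinct prime divisors of $k$ that lie in $[3,2(d-1)]$ is at most the number of odd primes up to $2(d-1)$; I would argue that the product of \emph{all} odd primes up to $2(d-1)$ exceeds any reasonable portion of $k$ only for small $d$, so instead the robust argument is: the set of odd primes in $[3,2(d-1)]$ has more than $\log_2 k$... no — cleaner: among the odd primes $\le 2(d-1)$, at most finitely many divide $k$, but we only need \emph{one} that does not. Since every odd prime dividing $k$ that is $\le 2(d-1)$ contributes a factor $\ge 3$ to $k$, and there are at most $\pi(2(d-1))-1$ of them, if \emph{all} odd primes up to $2(d-1)$ divided $k$ we would still just need the \emph{next} one — but that may exceed the bound. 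The actual clean statement: by Bertrand's postulate there is a prime $p$ with $d-1<p\le 2(d-1)$; this $p$ is odd (as $d\ge 6$ makes $p\ge 7$), lies in the allowed range $p\le 2(d-1)\le k/(d-1)$, and since $p>d-1\ge 5$ while... hmm, $p$ could still divide $k$. So I would instead take \emph{two} primes from Bertrand-type results: there exist (for $d-1\ge 5$) at least two primes in $(d-1, 2(d-1)]$, and they cannot both divide $k$ unless their product $\le k$, which is automatic, so this doesn't immediately work either.

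The honest approach for the even case is: the primes $\ell$ with $\ell\le 2(d-1)$ that we may use are all primes in $[3, 2(d-1)]$ coprime to $k$; we need at least one to exist. The product of all primes in $[3,2(d-1)]$ is a primorial-type quantity, call it $Q(d)$. If \emph{every} such prime divided $k$ then $Q(d)\mid k$. This does not contradict $k\ge 2(d-1)^2$ in general, so I would refine: we only need $\ell$ with $\ell(d'-1)<k+1$, and $d'$ can be as small as the actual $\alpha$-structure forces — but conservatively I would instead prove the theorem by taking $\ell$ to be the smallest prime not dividing $k$. By a standard estimate the smallest prime not dividing $k$ is $O(\log k / \log\log k)$, in particular eventually smaller than $\sqrt{k}/(d-1)$; making this effective for $k\ge 2(d-1)^2$ and $d\ge 6$ is the crux. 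Concretely: among $2,3,5$, at least one does not divide $k$ unless $30\mid k$; if the smallest prime not dividing $k$ is $\ell_0$, then $2\cdot3\cdots$ (all primes $<\ell_0$) divides $k$, so $k\ge \prod_{p<\ell_0}p$, whence $\ell_0$ grows only logarithmically in $k$; then one verifies $\ell_0\le \frac{k}{d-1}$ using $k\ge 2(d-1)^2$, splitting into a few small-$d$ cases checked by hand and a general range where the primorial lower bound dominates. The remaining piece, matching the statement, is that I must \emph{also} ensure the conclusion covers the original degree $d$, not just $d'$: since $d'\le d$ and the condition $\ell(d-1)<k+1$ implies $\ell(d'-1)<k+1$, the same $\ell$ serves all $d'\le d$ simultaneously, so no per-$d'$ bookkeeping is needed.

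The main obstacle is this last number-theoretic step in the even case: showing that for $d\ge 6$ and $k\ge 2(d-1)^2$ even, there is always a prime $\ell$ with $\ell(d-1)<k+1$ and $\ell\nmid k$. I expect it to come down to the bound "the smallest prime not dividing $k$ is at most $c\log k$" together with a finite computer/hand check of the boundary cases (small $d$, or $k$ just above $2(d-1)^2$), exactly the kind of mixed analytic-plus-finite-verification argument the paper's Remark \ref{rem} suggests the authors are comfortable with. Everything upstream — extracting $H_\alpha$, its $2$-criticality, the trace identities, and Proposition \ref{prime} — is already in hand from the earlier sections and plugs in mechanically.
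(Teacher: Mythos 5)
Your overall architecture is the same as the paper's: extract the $(d',k)$-subdigraph $H_{\alpha}$ with $2$-critical structure and $m_j=0$ for $j>d'-1$ via Corollary \ref{maincor}, reduce to Proposition \ref{prime}, note that a single prime $\ell$ with $\ell(d-1)<k+1$ and $\gcd(\ell,k)=1$ serves every $d'\le d$ at once, and settle the odd case with $\ell=2$. All of that is correct and complete. The genuine gap is the even case, which is the substantive half of the theorem and which you explicitly leave open (``the crux'', ``a finite computer/hand check of the boundary cases''): you never prove that for $d\ge 6$ and even $k\ge 2(d-1)^2$ there exists a prime $\ell<\frac{k+1}{d-1}$ with $\ell\nmid k$. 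Your sketched route --- let $\ell_0$ be the smallest prime not dividing $k$, so that $\prod_{p<\ell_0}p\le k$ and $\ell_0$ is only logarithmically large --- points in the right direction, but the quantitative comparison that closes the argument is exactly what is missing; indeed your own remark that ``$Q(d)\mid k$ does not contradict $k\ge 2(d-1)^2$ in general'' shows you had not found the inequality that makes it bite, and an appeal to an unspecified computer check is not a proof.

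The paper closes this step with Bertrand's postulate and no computation. Suppose all primes $2=p_1<p_2<\dots<p_s<\frac{k+1}{d-1}\le p_{s+1}$ divide $k$, so $p_1p_2\cdots p_s\mid k$. Bertrand gives $p_{s+1}<2p_s$, and $k\ge 2(d-1)^2$ gives $2(d-1)<\frac{k+1}{d-1}\le p_{s+1}<2p_s$, hence $d-1<p_s$ and, crucially, the \emph{upper} bound $k<2p_s(d-1)<2p_s^{2}$. If $s\ge 4$ then, using $p_s<2p_{s-1}$ and $p_{s-3},p_{s-2}\ge 2$, one gets $k\ge p_{s-3}p_{s-2}p_{s-1}p_s\ge 2p_s^{2}$, a contradiction; so $s\le 3$, i.e.\ $\frac{k+1}{d-1}\le p_4=7$, whence $2(d-1)^2\le k<7(d-1)$ and $2(d-1)<7$, impossible for $d\ge 6$. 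In other words, the divisibility $p_1\cdots p_s\mid k$ is played off against an upper bound on $k$ in terms of $p_s$, not against the lower bound $k\ge 2(d-1)^2$ directly. Your primorial idea can also be made to work along these lines (in this range $k<\frac{1}{2}\bigl(\frac{k+1}{d-1}\bigr)^{2}$, while $\prod_{p<y}p>\frac{y^2}{2}$ once $y\ge 10$), but some such explicit inequality must be supplied; as written, the even case of Theorem \ref{thmmain} is not proved.
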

\begin{proof}
We first prove that in these conditions there are no $(d,k)$-digraphs with self-repeats and permutation cycle structure $(k,\ldots,m_{d-1},0,\ldots,0)$. Indeed, according to Proposition \ref{prime}, it is enough to find a prime value $\ell$ such that
\begin{equation}
  \label{cond}
\gcd(\ell,k)=1 \text{ and }
1<\ell< \dfrac{k+1}{d-1}.
\end{equation}
Consider the distinct consecutive prime numbers until $(k+1)/(d-1)$: 
\begin{equation}
2 = p_1 < p_2 < \cdots < p_s < \frac{k+1}{d-1} \leq p_{s+1}.
\label{seqprime}
\end{equation}
If there exists $j$ with $1 \leq j \leq s$ such that $p_j \nmid k$ from Proposition \ref{prime} it turns out there are no $(d,k)$-digraphs for such $d$ and $k$. In particular, if $k$ is odd, taking $\ell=2$, we deduce that for $k\geq 2(d-1)$ there are no $(d,k)$-digraphs with structure $(k,\ldots,m_{d-1},0,\ldots,0)$.  
Otherwise, $k$ is even and 
$$p_1p_2\ldots p_s \mid k.$$
Now, assume  $k\geq 2(d-1)^2$. Since there exists at least a prime number between $p_s$ and $2p_s$, it turns out that
\begin{equation}
2(d-1)< \dfrac{k+1}{d-1} \leq p_{s+1} < 2p_s.
\label{quo}
\end{equation}
Then $d-1< p_{s}$ and 
$$k < 2p_s(d-1) < 2p_s^2.$$
If $s \geq 4$, since $p_{s-1}<p_s<2p_{s-1}$, we have
$$ 
2p_s^2 \leq p_{s-3}(2p_{s-1})p_{s}\leq  p_{s-3}p_{s-2}p_{s-1}p_{s}   \leq  k < 2p_s^2.
$$
Thus, we must have $s \leq 3$ and, taking into account (\ref{seqprime}) and (\ref{quo}), we get 
$$2(d-1)^2 \leq k < p_4(d-1)=7(d-1).$$
This implies $ 2(d-1) < 7$, which is  a contradiction since $d \geq 6$. Therefore, if $k$ is even it turns out that for $k\geq 2(d-1)^2$ there are no $(d,k)$-digraphs with self-repeats and $m_j=0$, $\forall j\geq d$. 

Assume now there exists a $(d,k)$-digraph with self-repeats $G$ such that the pair $(d,k)$ satisfies the conditions of the claim. According to Corollary (\ref{maincor}), $G$ contains a $(d',k)$-subdigraph $H_{\alpha}$, $d'\leq d$, whose permutation cycle structure  is 2-critical  with $m_j=0$ for all $j>d'-1$. Since either $k \geq 2(d-1)\geq 2(d'-1)$ if $k$ odd or
 $k \geq 2(d-1)^2\geq 2(d'-1)^2$ if $k$ even, $H_{\alpha}$ can not exist. Therefore, neither does $G$.          
\end{proof}



Combining
Theorem \ref{thmmain} together with Remark \ref{rem}, we can prove the nonexistence of $(d,k)$-digraphs with self-repeats for degrees $d$, $6\leq d\leq 12$. 

\begin{corollary}
  There are no $(d,k)$-digraphs, $k> 2$, with self-repeats for
  $$d\in\{6,7,8,9,10,11,12\}.$$
\end{corollary}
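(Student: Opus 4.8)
The plan is to combine Theorem~\ref{thmmain} with the computational verification recorded in Remark~\ref{rem}, splitting the argument by the parity of $k$ and, when $k$ is small, invoking the cyclotomic conjecture case of the nonexistence result from \cite{cggmm14}. First I would fix $d$ with $6 \leq d \leq 12$ and suppose, for contradiction, that a $(d,k)$-digraph $G$ with self-repeats exists for some $k > 2$. By Corollary~\ref{maincor}, $G$ contains a $(d',k)$-subdigraph $H_\alpha$ with $d' \leq d$ whose permutation cycle structure is $2$-critical with $m_j = 0$ for $j > d'-1$ and $\alpha \mid (d'-1)$; since $d' \leq 12$ the relevant cycle lengths $j$ that can appear (namely $j = 2^t\alpha$ with $\alpha \mid (d'-1) \leq 11$) are bounded, and in particular every such $j$ satisfies $j \leq 11 < 14$, so $j$ lies in the range $2 < j \leq 14$ covered by Remark~\ref{rem}.

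Next I would dispose of the large-$k$ range directly. If $k$ is odd with $k \geq 2(d-1)$ or $k$ is even with $k \geq 2(d-1)^2$, then Theorem~\ref{thmmain} applies verbatim (using $d \geq 6$) and rules out $G$. So it remains to treat the finitely many pairs $(d,k)$ with $6 \leq d \leq 12$, $k > 2$, and $k$ outside that range: for odd $k$ this means $2 < k < 2(d-1) \leq 22$, and for even $k$ this means $2 < k < 2(d-1)^2 \leq 242$. In every such case $4 < k \leq 241 < 200$... here I would be slightly more careful: $2(d-1)^2 \leq 242$ for $d = 12$, so $k$ can be as large as $240$, which exceeds the bound $k \leq 200$ in Remark~\ref{rem}. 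I would therefore either (a) observe that for even $k$ one still gets a prime $\ell$ with $\gcd(\ell,k)=1$ and $1 < \ell < (k+1)/(d-1)$ in all but a handful of residual pairs, pushing the even case further down by the same Bertrand-type argument as in Theorem~\ref{thmmain} but tailored to the concrete small values of $d$, or (b) extend the computational check of Remark~\ref{rem} to $k \leq 240$, or (c) note that $k = 3, 4$ are already excluded by \cite{cggmm08,cggmm13}, narrowing the residual range. The cleanest write-up uses (a) together with (c): for $d \leq 12$ one checks by hand that the even range $4 < k < 2(d-1)^2$ is covered either by a prime $\ell$ as in Proposition~\ref{prime} or falls into the already-settled cases $k=3,4$, leaving only finitely many explicit triples.

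For each surviving pair $(d,k)$ — of which there are only a few dozen — I would apply the cyclotomic-conjecture criterion: if the cyclotomic conjecture holds for $F_{i,k}(x)$ for all $i$ with $m(i) \neq 0$, then by \cite{cggmm14} no $(d,k)$-digraph with that permutation cycle structure exists. Since $H_\alpha$ has $2$-critical structure with all relevant cycle lengths $j \leq 11$, the indices $i$ with $m(i) \neq 0$ satisfy $i \leq 11 < 14$, and all surviving $k$ satisfy $4 < k \leq$ (a bound I can arrange to be $\leq 200$ after trimming via (a) and (c)); hence Remark~\ref{rem} guarantees the conjecture holds for precisely these $F_{i,k}(x)$. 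This contradicts the existence of $H_\alpha$, hence of $G$.

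The main obstacle is bookkeeping rather than conceptual: I must verify that, after removing the large-$k$ range handled by Theorem~\ref{thmmain} and the $k = 3,4$ cases handled by earlier literature, every remaining pair $(d,k)$ with $6 \leq d \leq 12$ has all its potentially-nonzero cyclotomic indices $i$ in the range $2 < i \leq 14$ \emph{and} diameter $k$ in the range $4 < k \leq 200$ of Remark~\ref{rem} — and where the even-$k$ range initially overshoots $200$, I must close that gap with an ad hoc prime-$\ell$ argument as in Proposition~\ref{prime}. Once this finite case analysis is laid out (it is genuinely finite and small because $d \leq 12$ forces $\alpha \mid (d'-1) \leq 11$), the conclusion is immediate. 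I would present the argument as: reduce to $H_\alpha$ via Corollary~\ref{maincor}; kill large $k$ via Theorem~\ref{thmmain}; kill $k=3,4$ via \cite{cggmm08,cggmm13}; kill the even-$k$ overshoot via Proposition~\ref{prime}; and kill the remaining finitely many pairs via \cite{cggmm14} together with Remark~\ref{rem}.
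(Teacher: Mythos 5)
Your proposal is correct and follows essentially the same route as the paper: reduce to the subdigraph $H_{\alpha}$ via Corollary~\ref{maincor}, dispose of large $k$ via Theorem~\ref{thmmain}, use Remark~\ref{rem} together with \cite{cggmm14} for the remaining diameters, and close the $d=12$, even-$k$ gap on $[200,240]$ by exhibiting a prime $\ell$ as in Proposition~\ref{prime}. Your explicit appeal to the known $k=3,4$ results is a minor extra care (Remark~\ref{rem} only covers $4<k\leq 200$) that the paper glosses over, but otherwise the arguments coincide.
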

\begin{proof}
  By Theorem \ref{thmmain}, $(d,k)$-digraphs with self-repeats do not exist when $k\geq 2(d-1)^2$. Since these digraphs contain a subdigraph $H_{\alpha}$ whose permutation cycle structure satisfies $m_i=0$, for $i>d-1$ (see Corollary \ref{maincor}),  it is straightforward  to show they neither exist for degrees $d$, $6\leq d\leq 11$, and $2<k<2(d-1)^2$.
  Indeed, the polynomials $F_{i,k}(x)$, $2<i\leq 12$ and $2<k<200$, factorize in $\mathbb Q[x]$ according to the conjecture, which is checked computationally after Remark \ref{rem}. Therefore, by \cite{cggmm14} $(d,k)$-digraphs do not exist for degrees $d$ such  that $5\leq d-1\leq 10$ and $k>2$.

 For degree $d=12$, again by Remark \ref{rem} such digraphs do not exist  when $2<k<200$ and by Theorem \ref{thmmain} when $k\geq 2(d-1)^2=242$.  Besides, if $k$ is odd they do exist neither when $k\geq 2(d-1)=22$. Then, it is enough to see that the $(12,k)$-digraphs do not exist  for $k$ even in the interval $[200, 240]$. But for this range of $k$ one can find a prime $\ell$ satisfying conditions (\ref{cond}).
\end{proof}

Finally, as a summary, Figure \ref{values}  shows the values of the degre $d$ and the diameter $k$ it is proven the existence or nonexistence of $(d,k)$-digraphs with self-repeats.

\begin{figure}[htb]
\begin{center}
\includegraphics[scale=0.7]{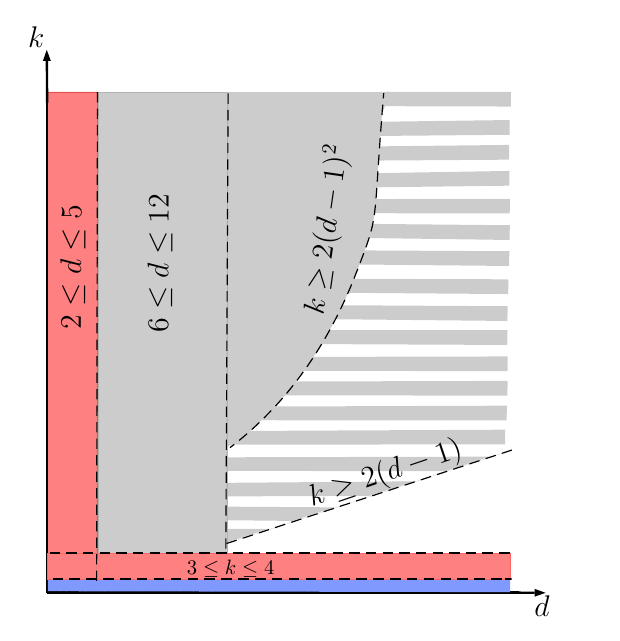}
\caption{Values of $d$ and $k$ for the nonexistence of $(d,k)$-digraphs.}
\label{values}
\end{center}
\end{figure}

 We have drawn in blue the line $k=2$ for which we know the existence of $(d,2)$-digraphs. The area in red corresponds to the values $d\in[2,6]$ and $k\in[3,4]$, where we already knew the nonexistence of $(d,k)$-digraphs with self-repeats. And the area determined by $6\leq d\leq 12$, as well as the area $k\geq 2(d-1)^2$ together with lines $k$ odd between $[2(d-1),2(d-1)^2]$, all of them drawn in grey, are the new values for which the nonexistence has also been proven. 



\end{document}